\theoremstyle{definition}
\newtheorem{definition}{Definition}[section]
\theoremstyle{plain}
\newtheorem{theorem}{Theorem}[section]
\newtheorem{lemma}{Lemma}[section]
\newtheorem{corollary}{Corollary}[section]
\newtheorem{proposition}{Proposition}[section]
\theoremstyle{remark}
\newtheorem{remark}{Remark}[section]
\theoremstyle{definition}
\newtheorem{acknow}{Acknowledgment}
\begin{document}
\title{On Generalization of Homotopy of Words and Its Applications}

\author{FUKUNAGA Tomonori}\thanks{The author is JSPS research fellow (DC).
This work was supported by KAKENHI}

\maketitle

\begin{abstract}
V. Turaev introduced the theory of topology of words and phrases 
in 2005. This is a combinatorialy extension of the theory of 
virtual knots and links. In this paper we generalize the 
notion of homotopy of words and phrases and 
we give geometric meanings of the generalized homotopy of words. 
Moreover using the generalized homotopy theory of words and phrases,
we extend some homotopy invariants of nanophrases to 
$S$-homotopy invariant of nanowords with some homotopy data $S$.                
\end{abstract}

{\bf keywords:} nanowords, nanophrases, homotopy of words and phrases, 
curves on surfaces, ornaments. \\ \par
 Mathematics Subject Classification 2000: Primary 57M99; Secondary 68R15

\markboth{FUKUNAGA Tomonori}{On Generalization of Homotopy of Words and Its Applications}

\section{Introduction.}
In the papers \cite{tu1} and \cite{tu2}, 
V. Turaev introduced the theory of topology of words and phrases.
This is a combinatorialy extension of the theory of virtual 
strings or the virtual knots and links 
(see \cite{kad}, \cite{ka}, \cite{sw} and \cite{tu4} for example).\par
In this paper, words are finite sequences of letters
in a given alphabet, letters are elements of an alphabet and phrases
are finite sequences of words. V. Turaev introduced generalized words and 
phrases in \cite{tu1} and \cite{tu2} which are called \'etale words
and \'etale phrases. Let $\alpha$ be an alphabet
endowed with an involution $\tau:\alpha \rightarrow \alpha$. Let
$\mathcal{A}$ be an alphabet endowed with a mapping 
$|\cdot|:\mathcal{A} \rightarrow \alpha$ which is called a projection.
We call this $\mathcal{A}$ an $\alpha$-alphabet. Then we call a
pair an $\alpha$-alphabet $\mathcal{A}$ and a word on $\mathcal{A}$ 
an \'etale word. 
If all letters in $\mathcal{A}$ appear exactly twice, then we call
this \'etale word a nanoword. Similarly we call a pair
an $\alpha$-alphabet $\mathcal{A}$ and a phrase on $\mathcal{A}$ 
an \'etale phrase. Further if each letters in $\mathcal{A}$ 
appear exactly twice, then we call this \'etale phrase a nanophrase.\par
Turaev introduced an equivalence relation which is called $S$-homotopy on 
the set of  
nanowords and nanophrases for a fixed subset $S$ of $\alpha$ $\times$ $\alpha$
$\times$ $\alpha$ which is called homotopy data. 
$S$-homotopy of nanowords and nanophrases 
is generated by isomorphism, and three homotopy moves. 
The first homotopy move is deformation
that changes $xAAy$ into $xy$. The second homotopy move is deformation that
changes $xAByBAz$ into $xyz$ when $|A|$ is equal to $\tau(|B|)$. 
The third homotopy move is deformation 
that changes $xAByACzBCt$ into $xBAyCAzCBt$ when a triple $(|A|,|B|,|C|)$
is a element of homotopy data $S$. This equivalence relation is suggested 
by the Reidemeister moves in the theory of knots. Moreover in the paper 
\cite{tu2},
Turaev showed special cases of the theory of topology of phrases 
corresponds to the theory of 
stable equivalent classes of ordered, pointed, oriented multi-component
curves on surfaces and knot diagrams. Note that the theory of
stable equivalence classes of ordered, pointed, oriented multi-component
curves on surfaces (respectively link diagrams) is equivalent to the theory
of ordered open flat virtual links (respectively ordered open virtual
links).\par
Now in this paper, we generalize the notion of the $S$-homotopy
moves in Turaev's theory of words. For fixed sets $Q \subset \alpha$ and
$R \subset \alpha \times \alpha$, we replace the first homotopy move
and the second homotopy moves in Turaev's theory of words as follows.
The first homotopy move is deformation that changes $xAAy$ into $xy$
when $|A|$ is an element of $Q$ and  the second homotopy move is 
deformation that changes $xAByBAz$ into $xyz$ 
when a pair $|A|$ and $|B|$ is an element of $R$.
We call an equivalence relation which is generated by 
isomorphism, and three new homotopy moves $(Q,R,S)$-homotopy of 
nanophrases. When $Q$ is equal to $\alpha$ and $R$ is graph of 
$\tau$, we obtain Turaev's $S$-homotopy of nanophrases. 
Moreover we give the geometric meanings of the generalized homotopy
theory of words and phrases. We show the generalized homotopy 
theory of words in common generalization of the theory of the 
multi-component virtual strings and the theory of the ornament.
On the other hand, we construct some $(Q,R,S)$-homotopy invariants for 
nanowords from $S$-homotopy invariants in Turaev's theory of words.
In the paper \cite{gi1}, A.Gibson constructed a homotopy invariant for
Gauss phrases (in other words, nanophrases over the one-element set)
which is called $S_{o}$. Moreover Gibson showed the invariant $S_{o}$
is strictly stronger than the invariant $T$ for Gauss phrases
which was defined by the author in \cite{fu1}.  
In this paper we extend Gibson's $S_{o}$ invariant for Gauss phrases to 
$(\Delta_{\alpha^{3}})_{\alpha,k}$-homotopy invariant for nanowords over 
$\alpha_{k}$ where $\Delta_{\alpha^{3}}$ is the diagonal set of 
$\alpha^{3}$ and
$\alpha_{k}$ and $S_{\alpha,k}$ are defined in Section \ref{gmqrs}.
To do this, we extend Gibson's $S_{o}$ invariant to 
$(\Delta_{\alpha^{3}})$-homotopy invariant for nanophrases over 
any $\alpha$ 
(This extension problem was mentioned in the paper \cite{gi1}).\par
The rest of this paper is organized as follows.
In Section \ref{s2}, 
we review the theory of topology of nanowords and nanophrases.  
In Section 3 we introduce the generalized homotopy of nanowords and 
nanophrases. 
In Section \ref{gm}, we discuss the geometric meanings of 
$S$-homotopy of nanophrases and the generalized homotopy 
of nanowords. 
In Section \ref{ext}, we construct $(Q,R,S)$-homotopy invariants
from some $S$-homotopy invariants of nanophrases which was defined in
papers \cite{fu1}, \cite{fu2} and \cite{gi1}. Moreover we show some of
them are $S$-homotopy invariant for nanowords with 
some homotopy data $S$ which is grater than the diagonal set of 
$\alpha^{3}$.   
\section{Review of Turaev's Theory of Words and Phrases}\label{s2}
In this section we review Turaev's theory of words and phrases
which was introduced by V. Turaev in papers \cite{tu1} and \cite{tu2}.
We can find a survey of Turaev's theory of words in the paper
\cite{tu3}.
\subsection{\'Etale words and Nanowords.}
First we prepare some terminologies. In this paper 
an \emph{alphabet} means a finite set and a \emph{letter}
means an element of an alphabet. For an alphabet $\mathcal{A}$ and
$n \in \mathbb{N}$,
a \emph{word} on $\mathcal{A}$ of length $n$ is a map 
$w:\hat{n} \rightarrow \mathcal{A}$ where $\hat{n}$ is 
$\{1,2,\cdots,n\}$. We denote a word of length $n$ by
$w(1)w(2)\cdots w(n)$.   
We regard the map from empty set to empty set as 
the word of length $0$ and denote it by $\emptyset$. 
A \emph{phrase} of length $k$ on $\mathcal{A}$ is 
a sequence of words $w_{1},w_{2}, \cdots , w_{k}$ on $\mathcal{A}$.
We denote this sequence by $(w_{1}|w_{2}| \cdots |w_{k})$.
We call the number $\sum_{i=1}^{k}$(length of $w_{i}$)   
\emph{number of letters} of the phrase.
Especially if each letter in $\mathcal{A}$ appear exactly twice 
in a word $w$ on $\mathcal{A}$, then we call this word $w$ a 
\emph{Gauss word}. Similarly
for a phrase $P$ on $\mathcal{A}$ if each letter in $\mathcal{A}$
appear exactly twice in $P$, then we call $P$ a \emph{Gauss phrase}
(C. F. Gauss studied topology of plane curves using Gauss words. See
\cite{ga} for more details).\par   
In \cite{tu1} and \cite{tu2}, Turaev introduced generalized words and 
phrases. Let $\alpha$ be an alphabet endowed with an involution 
$\tau:\alpha \rightarrow \alpha$. Then an \emph{$\alpha$-alphabet}
is a pair of an alphabet $\mathcal{A}$ and a map 
$| \cdot |:\mathcal{A} \rightarrow \alpha$. We call this map
$| \cdot |$ \emph{projection} and we denote the image of 
a letter $A \in \mathcal{A}$ under the projection $|A|$.
We call $|A|$ \emph{projection of $A$}. Now we define 
generalized words and Gauss words which is called 
\'etale words and nanowords.
An \emph{\'etale word} over $\alpha$ is a pair 
(an $\alpha$-alphabet $\mathcal{A}$, a word $w$ on $\mathcal{A}$).
We call length of $w$ \emph{length of \'etale word} $(\mathcal{A},w)$.
Especially if $w$ is a Gauss word on $\mathcal{A}$, 
then we call $(\mathcal{A},w)$ 
a \emph{nanoword}.
Next we define generalized phrases and Gauss phrases
which is called \'etale phrases and nanophrases.
An \emph{\'etale phrase} over $\alpha$ is a pair
(an $\alpha$-alphabet $\mathcal{A}$, a phrase $P$ on $\mathcal{A}$).
We call length of $P$ (respectively numbers of letters) 
\emph{length of \'etale phrase} (respectively 
numbers of letters) $(\mathcal{A},P)$. 
Especially if $P$ is a Gauss phrase on $\mathcal{A}$, 
then we call $(\mathcal{A},w)$ a \emph{nanophrase}.
\subsection{$S$-homotopy of Nanophrases.}
In the paper \cite{tu1} Turaev defined an equivalence relation
on nanophrases which is called $S$-homotopy. This is suggested
by the Reidemeister moves in the theory on knots. In this subsection,
we introduce $S$-homotopy theory of words and phrases. \par
To define $S$-homotopy of nanophrases we prepare some definitions.
First we define isomorphic of nanophrases.
\begin{definition}
Let $(\mathcal{A}_{1},(w_{1}| \cdots |w_{k}))$ and 
$(\mathcal{A}_{2},(v_{1}| \cdots |v_{k}))$ be
nanophrases of length $k$ over an alphabet $\alpha$.
Then $(\mathcal{A}_{1},(w_{1}| \cdots |w_{k}))$ and 
$(\mathcal{A}_{2},(v_{1}|\cdots|v_{k}))$ are \emph{isomorphic}
if there exist a bijection $\varphi$ between $\mathcal{A}_{1}$ 
and $\mathcal{A}_{2}$ such that $|A| = |\varphi(A)|$  for all 
$A \in \mathcal{A}_{1}$ and 
$v_{j} = \varphi \circ w_{j}$ for each $j \in \hat{k}$.  
\end{definition} 
Next we define $S$-homotopy moves of nanophrases.
\begin{definition}
Let $S$ be a subset of $\alpha \times \alpha \times \alpha$. Then
we define \textit{S-homotopy moves} (1) - (3) of nanophrases as follows:
 \par
 (1) $(\mathcal{A} , (xAAy)) \longrightarrow 
(\mathcal{A} \setminus \{ A \} , (xy))$ 
 \par \enskip \enskip \enskip 
for all $A \in \mathcal{A}$ and $x,y$ are sequences of letters in 
$\mathcal{A} \setminus \{ A \}$, possibly including \par
\enskip \enskip \enskip the $|$ character.
 \par
(2) $(\mathcal{A} , (xAByBAz)) \longrightarrow (\mathcal{A} \setminus \{ A , B \} , (xyz))$
   \par \enskip \enskip \enskip 
if $A , B \in \mathcal{A}$ satisfy $|B| = \tau (|A|)$. $x,y,z$ are sequences of
letters in 
$\mathcal{A} \setminus \{A,B\}$, \par
\enskip \enskip \enskip possibly including $|$ character.
 \par
(3) $(\mathcal{A} , (xAByACzBCt)) \longrightarrow (\mathcal{A} , (xBAyCAzCBt))$
 \par \enskip \enskip \enskip if $A,B,C \in \mathcal{A}$ 
satisfy $(|A|,|B|,|C|) \in S$. $x,y,z,t$ are sequences of letters in \par
\enskip \enskip \enskip $\mathcal{A}$, possibly including $|$ character.
\end{definition}
We call this $S$ \emph{homotopy data}.\par
Now we define \emph{$S$-homotopy} of nanophrases.
\begin{definition}
Let $(\mathcal{A}_{1},P_{1})$ and $(\mathcal{A}_{2},P_{2})$ be 
nanophrases over $\alpha$.
Then $(\mathcal{A}_{1},P_{1})$ and $(\mathcal{A}_{2},P_{2})$ are
\emph{$S$-homotopic} (denote $\simeq_{S}$) 
if they are related by a finite sequence of 
isomorphism, $S$-homotopy moves (1) - (3) and inverse of (1) - (3). 
\end{definition}
\begin{remark}
$S$-homotopy moves and isomorphism of nanophrases do not change length of 
nanophrases. Thus for two different integers $k_{1}$ and $k_{2}$, 
a nanophrase of length $k_{1}$ and 
a nanophrase of length $k_{2}$
are not homotopic each other.
\end{remark}
Especially if $S$ is the diagonal set of $\alpha \times \alpha 
\times \alpha$, then we call $S$-homotopy \emph{homotopy}.\par
We denote the set $\{ \rm{Nanophrases \ of \ length k \ over \ \alpha} 
\} / (S-\rm{homotopy})$ by $\mathcal{P}_{k}(\alpha,S)$ and 
$\mathcal{P}_{1}(\alpha,S)$ by $\mathcal{N}(\alpha,S)$.
We recall two lemmas from \cite{tu1} and \cite{tu2}.
\begin{lemma}
 Let $(\alpha,S)$ be homotopy data and $\mathcal{A}$ be an $\alpha$-alphabet.
 Let $A,B,C$ be distinct letters in $\mathcal{A}$ and let
 $x,y,z,t$ be words in $\mathcal{A} \setminus \{A,B,C\}$ such that $xyzt$ is
 a Gauss word. Then the following (i)-(iii) hold : \par
 (i) $(\mathcal{A} , xAByCAzBCt) \simeq_S (\mathcal{A} , xBAyACzCBt)$
 \par \enskip \enskip \enskip if $(|A|,\tau(|B|),|C|) \in S$, \par
 (ii) $(\mathcal{A} , xAByCAzCBt) \simeq_S (\mathcal{A} , xBAyACzBCt)$
 \par \enskip \enskip \enskip if $(\tau(|A|),\tau(|B|),|C|) \in
 S$, \par
 (iii)$(\mathcal{A} , xAByACzCBt) \simeq_S (\mathcal{A} , xBAyCAzBCt)$
 \par \enskip \enskip \enskip if $(|A|,\tau(|B|),\tau(|C|)) \in S$.
\end{lemma}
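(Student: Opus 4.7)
The plan is to deduce each of (i)--(iii) from the standard homotopy move (3), using the inverse of move (2) to introduce auxiliary letters with carefully chosen projections. In every case the argument has the same shape: starting from the LHS of the claimed homotopy, insert new letters so that the resulting word contains a sub-pattern of the form $A'B'\cdots A'C'\cdots B'C'$ to which move (3) directly applies, the projection triple $(|A'|,|B'|,|C'|)$ being the one assumed to lie in $S$; then apply move (3); and finally cancel the auxiliary letters by move (2) to obtain the RHS.

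For (iii), the simplest case, I would introduce a single pair of new letters $D,E$ with $|E|=\tau(|D|)$ and $|D|=|C|$, placed so as to form an inverse--(2) pattern $DE\cdots ED$ and simultaneously produce a standard move--(3) configuration for the triple $(A,B,D)$ whose projection triple is exactly $(|A|,\tau(|B|),\tau(|C|))\in S$. For (i), a similar one-pair insertion around the ``misplaced'' occurrence of $B$ yields a standard move--(3) pattern on a triple with projections $(|A|,\tau(|B|),|C|)\in S$. For (ii), two auxiliary pairs are required (one effectively ``flipping'' $A$, the other ``flipping'' $B$), producing the triple $(\tau(|A|),\tau(|B|),|C|)\in S$.

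The main obstacle, and the step that requires genuine care, is the combinatorial placement of the auxiliary letters. The insertion must simultaneously (a) be a legitimate inverse of move (2), i.e.\ the new letters appear in the pattern $DE\cdots ED$ with $|E|=\tau(|D|)$; (b) produce a word whose six-letter sub-pattern matches exactly the LHS of a standard move (3) for the intended triple, with the surrounding subwords $x,y,z,t$ in the right places; and (c) leave the auxiliary letters in a cancellable inverse--(2) configuration after the move (3) has been applied, so that they can be removed cleanly by a final application of move (2). The hypothesis that $xyzt$ is a Gauss word is used precisely here: it guarantees that no extraneous occurrences of letters from $\mathcal{A}\setminus\{A,B,C\}$ interfere with the relevant sub-patterns, and consequently the R2 and R3 moves act only on the positions we intend. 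Once a correct placement is identified (drawing the corresponding Gauss diagram of six chords is the most efficient way to pick it), verifying each step is a routine unpacking of the definitions of moves (2) and (3).
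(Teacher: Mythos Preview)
The paper does not give its own proof of this lemma; it is explicitly introduced with ``We recall two lemmas from \cite{tu1} and \cite{tu2}'' and stated without argument. So there is no in-paper proof to compare against.

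That said, your strategy is exactly the one Turaev uses in \cite{tu1}: realise each variant third move by (a) an inverse of move~(2) introducing an auxiliary pair $D,E$ with $|E|=\tau(|D|)$ and $|D|$ chosen to match the desired $\tau$-twisted projection, (b) a single application of the basic move~(3) to a triple whose projection triple is the one hypothesised to lie in $S$, and (c) a forward move~(2) removing the auxiliary pair. Your identification of the genuine content---finding placements that simultaneously satisfy the inverse-(2) pattern, produce a clean move-(3) configuration, and leave the auxiliaries cancellable afterwards---is accurate, and the Gauss-word hypothesis on $xyzt$ is used precisely where you say.

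One small point worth checking when you write out the details: your count of auxiliary pairs (one for (i) and (iii), two for (ii)) is a reasonable first guess, but in Turaev's treatment each of the three cases is handled with a \emph{single} auxiliary pair, by a suitable choice of which of $A,B,C$ is ``replaced'' and where the pair is inserted. It is worth trying to find such a one-pair argument for (ii) as well; if you cannot, a two-pair argument is of course still valid, just slightly less economical.
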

\begin{lemma}
 Suppose that $S \cap (\alpha \times b \times b) \neq \emptyset$ for
 all $b \in \alpha$. Let $(\mathcal{A} , xAByABz)$ be a nanoword over
 $\alpha$ with $|B|=\tau(|A|)$ where $x,y,z$ are words in $\mathcal{A}
 \setminus \{A,B\}$ such that $xyz$ is a Gauss word. Then \\
 $$(\mathcal{A},xAByABz) \simeq_S (\mathcal{A} \setminus \{A,B\} , xyz).$$
\end{lemma}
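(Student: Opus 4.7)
The plan is a Reidemeister-III-plus-II argument: enlarge $xAByABz$ by inverses of the basic moves into a configuration in which a move (3) (or one of the $\tau$-variants of the previous lemma), made available by the hypothesis, converts the linked pair $ABAB$ into the antiparallel pair $ABBA$; then cancel $A,B$ by move (2) and clear the auxiliary letters by further applications of moves (1) and (2).

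Concretely, I would first use the hypothesis with $b=|B|$ to fix $p\in\alpha$ with $(p,|B|,|B|)\in S$, and introduce two auxiliary letters $C$ and $D$ with $|C|=|B|$ and $|D|=p$, inserted into $xAByABz$ via a combination of inverses of moves (1) and (2) at carefully chosen positions. The triple $(D,B,C)$ has projection triple $(p,|B|,|B|)\in S$, so a move (3) application to this triple is allowed; the auxiliary letters are positioned so that, after this application, the second occurrence of ``$AB$'' in the original has been reconfigured to ``$BA$'' (interspersed with copies of $C$ and $D$). Move (2) then cancels the now-antiparallel pair $(A,B)$, since $|B|=\tau(|A|)$, and the remaining auxiliary letters, which by construction sit in adjacent-duplicate or $PQ\cdots QP$-pattern configurations, are removed by successive applications of moves (1) and (2), yielding $xyz$.

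The delicate step is the placement of $C$ and $D$ in the enlargement. The move (3) pattern requires three literally adjacent pairs $A'B'$, $A'C'$, $B'C'$ with interstitial subwords free of $A',B',C'$, but inverse of move (1) can only insert adjacent duplicates (contrary to the non-adjacency the move-(3) pattern requires of each letter), and inverse of move (2) introduces a partner letter next to the inserted letter which can break a required literal adjacency. To set up the pattern one must therefore combine inverse-(1) and inverse-(2) insertions in a coordinated way, possibly with intermediate applications of move (3) supplied by Lemma 2.2.5 to shuffle partner letters into permitted interstitial subwords before the decisive move is performed. Once this bookkeeping is in place, the rest of the argument is a direct application of the available moves.
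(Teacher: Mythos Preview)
The paper does not give its own proof of this lemma; it is recalled from Turaev \cite{tu1}. There the argument uses a \emph{single} auxiliary letter and applies one of the variants of Lemma~2.1 to a triple containing both $A$ and $B$. Concretely: pick $p\in\alpha$ with $(p,|B|,|B|)\in S$, let $C$ be a new letter with $|C|=\tau(p)$, and insert $CC$ between the first $A$ and the first $B$ by an inverse of move~(1), obtaining $xACCByABz$. Reading this word as the right-hand side of Lemma~2.1(ii) with $(A',B',C')=(C,A,B)$, the required condition $(\tau(|A'|),\tau(|B'|),|C'|)=(p,|B|,|B|)\in S$ holds, and one gets $xACCByABz\simeq_S xCABCyBAz$. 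Now the pattern $AB\cdots BA$ is visible; move~(2) deletes $A,B$, and a final move~(1) deletes $CC$, leaving $xyz$.

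Your overall strategy is the right one, but your concrete plan has a genuine gap. You propose to apply move~(3) to the triple $(D,B,C)$, which does not contain $A$. However, move~(3) and each of the variants in Lemma~2.1 only exchange adjacent pairs drawn from the three \emph{named} letters; every other letter, in particular $A$, sits inside one of the interstitial subwords $x',y',z',t'$ and keeps its position. In $xAByABz$ each occurrence of $A$ precedes the corresponding occurrence of $B$, and no swap of $B$ with an adjacent $C$ or $D$ can carry $B$ to the other side of $A$, nor can inserting copies of $C,D$ between them be undone afterwards while leaving $B$ and $A$ transposed. So the asserted reconfiguration of the second $AB$ into $BA$ does not follow from a move on $(D,B,C)$. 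The decisive third move must involve both $A$ and $B$ together with one auxiliary letter; once you make that choice the ``delicate placement'' problem you flag in your last paragraph disappears, and a single inverse of move~(1) suffices.
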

In \cite{tu1}, Turaev constructed some homotopy invariants for
nanowords and gave the classification of nanowords of 
length less than or equal to $6$ up to homotopy. 
In \cite{fu2} the author constructed some homotopy
invariants for nanophrases and gave the homotopy classification of 
nanophrases with less than or equal to four letters without
condition on length of phrases. 
\section{Generalized Homotopy of Nanophrases.}
In this section we generalize the notion of $S$-homotopy.
Geometric meanings of generalized homotopy is discussed in
the next section.\par
First we fix two sets 
$Q \subset \alpha$, $R \subset \alpha \times \alpha$
and $S \subset \alpha \times \alpha \times \alpha$.
Then we define \emph{$(Q,R,S)$-homotopy} of nanophrases as follows.
\begin{definition}
We define \textit{$(Q,R,S)$-homotopy moves} (1) - (3) of
nanophrases as follows:\par
 (1) $(\mathcal{A} , (xAAy)) \longrightarrow 
(\mathcal{A} \setminus \{ A \} , (xy))$ 
 \par \enskip \enskip \enskip 
if $|A| \in Q$ and $x,y$ are sequences of letters in 
$\mathcal{A} \setminus \{ A \}$, possibly including \par
\enskip \enskip \enskip the $|$ character.
 \par
(2) $(\mathcal{A} , (xAByBAz)) \longrightarrow (\mathcal{A} \setminus \{ A , B \} , (xyz))$
   \par \enskip \enskip \enskip 
if $A , B \in \mathcal{A}$ satisfy $(|A|,|B|) \in R$. $x,y,z$ are sequences of
letters in 
$\mathcal{A} \setminus \{A,B\}$, \par
\enskip \enskip \enskip possibly including $|$ character.
 \par
(3) $(\mathcal{A} , (xAByACzBCt)) \longrightarrow (\mathcal{A} , (xBAyCAzCBt))$
 \par \enskip \enskip \enskip if $A,B,C \in \mathcal{A}$ 
satisfy $(|A|,|B|,|C|) \in S$. $x,y,z,t$ are sequences of letters in \par
\enskip \enskip \enskip $\mathcal{A}$, possibly including $|$ character.
\end{definition}
\begin{definition}
Let $(\mathcal{A}_{1},P_{1})$ and $(\mathcal{A}_{2},P_{2})$ be 
nanophrases over $\alpha$.
Then $(\mathcal{A}_{1},P_{1})$ and $(\mathcal{A}_{2},P_{2})$ are
\emph{$(Q,R,S)$-homotopic} (denote $\simeq_{(Q,R,S)}$) 
if they are related by a finite sequence of 
isomorphism, $(Q,R,S)$-homotopy moves (1) - (3) and inverse of (1) - (3). 
\end{definition}
We denote 
the set $\{ \rm{nanophrases \ of \ length \ k \ over \ } \alpha \} / 
((Q,R,S)-\rm{homotopy})$ by
 $\mathcal{P}_{k}(\alpha,Q,R,S)$
and $\mathcal{P}_{1}(\alpha,Q,R,S)$ by 
$\mathcal{N}(\alpha,Q,R,S)$.
Note that if we set $Q = \alpha$ and $R = \{(a,\tau(a))|a \in \mathcal{A} \}$,
then $(Q,R,S)$-homotopy is coincide to $S$-homotopy.
\begin{remark}
Turaev considered generalization of the second homotopy move 
in \cite{tu1} Section 3.6. We can see Turaev's generalization 
is equivalent to $(\alpha,R,S)$-homotopy.  
\end{remark}
In the remaining part of the paper we assume that $R$ is the
graph of $\tau$, that is $R = \{(a,\tau(a))\}_{a \in \alpha}$.
\section{Geometric Meanings of Generalized Homotopy of Nanowords.}\label{gm}
In this section we discuss geometric meanings of $(Q,R,S)$-homotopy
of nanowords.
\subsection{Stable equivalence of curves on surfaces.}
In this subsection we introduce stable equivalence of 
curves on surfaces. 
First we define some terminologies.
Through this paper a \emph{curve} means the image of a 
generic immersion of an oriented
circle into an oriented surface. The word ``generic'' means that the
curve has only a finite set of self-intersections which are all double
and transversal. A \emph{$k$-component curve} is defined in the same
way as a curve with the difference that they may be formed by $k$
curves. These curves are called \emph{components}
of the $k$-component curve. A $k$-component curves are \emph{pointed}
if each component is endowed with a base point (the origin) distinct
from the crossing points of the $k$-component curve. A $k$-component
curve is \emph{ordered} if its components are numerated. 
Next we introduce an equivalence relation which is called stably equivalent.
Two ordered, pointed curves are 
\emph{stably homeomorphic} if there is an orientation
 preserving homeomorphism of their regular neighborhoods in the ambient
surfaces mapping the first multi-component curve onto the second one and
preserving the order, the origins, and the orientations of the
components.\par
Now we define stable equivalence of ordered, pointed multi-component
curves \cite{ka}: Two ordered, pointed multi-component 
curves are \emph{stably equivalent} if they can be related by a finite
sequence of the following transformations: (i) a move
replacing an ordered, pointed multi-component curve with a 
stably homeomorphic 
one; (ii) the flattened Reidemeister moves away
from the origin as in Fig. \ref{fig1}.

\begin{figure}
\centerline{\includegraphics[width=9cm]{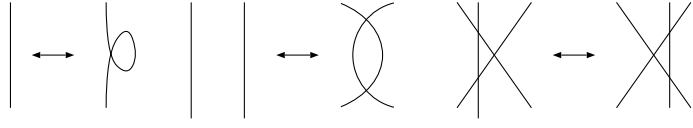}}
\caption{The flattened Reidemeister moves.}\label{fig1}
\end{figure}

\par 
We denote the set of stable equivalence classes of ordered, pointed 
$k$-component curves by $\mathcal{C}_k$.
\begin{remark}
The theory of stable equivalence of curves is closely
related to the theory of virtual strings.
See \cite{sw} and \cite{tu4} for more details.
\end{remark}  
\subsection{Geometric meanings of $S$-homotopy of  nanophrases.}
In the paper \cite{tu2} Turaev gave geometric meanings 
of $S$-homotopy of nanophrases over $\alpha$ with an involution 
$\tau$ for some $\alpha$, $S$ and $\tau$. More precisely, 
Turaev proved a following theorem.
\begin{theorem}\label{thmt1}
There is a canonical bijection between $\mathcal{C}_{k}$
and $\mathcal{P}_{k}(\alpha_{0},S_{0})$ where
$\alpha_{0} = \{a,b\}$ with an involution $\tau_{0}(a)$ $=$ $b$
and $S_{0}=\{(a,a,a),(b,b,b)\}$.  
\end{theorem}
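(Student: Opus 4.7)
The plan is to exhibit inverse maps $\Phi: \mathcal{C}_k \to \mathcal{P}_k(\alpha_0,S_0)$ and $\Psi: \mathcal{P}_k(\alpha_0,S_0) \to \mathcal{C}_k$ and check that they are well-defined and mutually inverse. For $\Phi$, starting from an ordered, pointed $k$-component curve $C = C_1 \sqcup \cdots \sqcup C_k$ on an oriented surface $\Sigma$, take $\mathcal{A}$ to be the set of double points of $C$. Traversing each $C_i$ from its base point in the direction of its orientation and recording the double points hit, in order, produces a word $w_i$; since every double point is visited exactly twice across all components, $(\mathcal{A},(w_1|\cdots|w_k))$ is a Gauss phrase of length $k$. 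The projection $|\cdot|:\mathcal{A} \to \alpha_0$ is defined at each double point $p$ by looking at the two branches $\beta_1,\beta_2$ through $p$ ordered by the order of visit, setting $|p|=a$ (resp. $|p|=b$) according to whether $(\dot\beta_1,\dot\beta_2)$ is a positively (resp. negatively) oriented basis of $T_p\Sigma$. For $\Psi$, from a nanophrase $(\mathcal{A},(w_1|\cdots|w_k))$ one joins the two occurrences of each letter $A$ on $k$ oriented pointed circles labeled by $w_1,\ldots,w_k$ by a chord, realizes every chord as a transversal double point with local sign prescribed by $|A|$, and takes a regular neighborhood of the resulting ribbon graph to obtain a surface $\Sigma$ in which $C$ is generically immersed.

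Next I would verify that $\Phi$ descends to stable equivalence classes. Stable homeomorphism is absorbed by the purely combinatorial data, so only the three flattened Reidemeister moves need attention. A Reidemeister I move adds or deletes an isolated loop contributing one crossing $A$; since both orientations of the small loop are allowed, $|A|$ can be either element of $\alpha_0$, matching the first $S_0$-homotopy move on $\alpha_0$. A Reidemeister II move inserts or removes a bigon with two crossings $A,B$, and a short local orientation computation shows that the roles of the branches switch between the two crossings, forcing $|B|=\tau_0(|A|)$. A Reidemeister III move preserves the three crossings $A,B,C$ of a triangle but permutes their cyclic order along the three branches; a direct analysis of the local picture shows that the three signed tangent pairs all receive the same orientation, hence $(|A|,|B|,|C|) \in \{(a,a,a),(b,b,b)\} = S_0$. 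Symmetrically, each $S_0$-homotopy move can be realized by a flattened Reidemeister move on the ribbon graph defining $\Psi$, possibly after stabilizing the surface, so $\Psi$ also descends to $\mathcal{C}_k$.

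Finally I would check that $\Psi \circ \Phi$ and $\Phi \circ \Psi$ are the identity. A regular neighborhood of $C$ in $\Sigma$ collapses onto a ribbon graph whose combinatorial data is exactly $\Phi(C)$, so $\Psi(\Phi(C))$ is stably homeomorphic to $C$; conversely, reading off the Gauss data from the ribbon graph underlying $\Psi(\mathcal{A},P)$ recovers $(\mathcal{A},P)$ up to isomorphism.

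The hard part will be the Reidemeister III analysis used to pin down the set $S_0$ in the well-definedness of $\Phi$. One must draw the local triangle before and after the move, carefully track the orientation of the three branches passing through each of the three crossings, and argue that the three signed tangent pairs are forced to agree because the triangular region constrains them simultaneously. Once the sign convention has been fixed, this reduces to a finite orientation-bookkeeping check, but getting the bookkeeping right (including ruling out the mixed triples such as $(a,a,b)$) is where the content of the theorem really sits.
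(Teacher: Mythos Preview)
The paper does not give its own proof of this theorem: it is quoted as a result of Turaev from \cite{tu2}, and only the later Theorems~\ref{thm1}--\ref{thm3} are proved here. Your sketch is essentially the standard argument (and matches the description of the map in Remark~\ref{rem1}), so on the level of strategy there is nothing to compare.

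That said, there is one genuine soft spot in your outline. In the Reidemeister~III step you assert that ``a direct analysis of the local picture shows that the three signed tangent pairs all receive the same orientation, hence $(|A|,|B|,|C|)\in S_0$.'' This is not quite how the argument goes. A geometric flattened RIII move, once read off along the global traversal, does \emph{not} always produce the pattern $xAByACzBCt$; depending on the order in which the three strands are visited one instead obtains one of the variant patterns $xAByCAzBCt$, $xAByCAzCBt$, $xAByACzCBt$ appearing in Lemma~2.1 of this paper. For those patterns the triple of signs is \emph{not} $(a,a,a)$ or $(b,b,b)$ on the nose---one has e.g.\ $(|A|,\tau(|B|),|C|)\in S_0$---and the corresponding word transformation is only an $S_0$-homotopy because it can be rewritten using a combination of the second move and the basic third move (this is exactly the content of Lemma~2.1, and Lemma~2.2 handles the parallel-bigon variant of move~II). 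So your ``finite orientation-bookkeeping check'' must include these auxiliary reductions; without them the claim that $\Phi$ descends is incomplete. Once you invoke Lemma~2.1 (or reprove it), the rest of your plan is fine.
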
 
Moreover let $\mathcal{L}_{k}$ be the set of \emph{stable equivalence 
classes of $k$-component pointed ordered oriented link diagrams} 
(definition of the stable equivalence of link diagrams is given
in \cite{tu2} for example).
Then Turaev proved following theorem.
\begin{theorem}\label{thmt2}
There is a canonical bijection between $\mathcal{L}_{k}$
and $\mathcal{P}_{k}(\alpha_{\ast},S_{\ast})$ where
 $\alpha_{\ast} = \{a_{+},a_{-},b_{+},b_{-}\}$ with
an involution $\tau_{\ast}(a_{\pm})=b_{\mp}$ and  
$S_{\ast}$$=$$\{(a_{\pm},a_{\pm},a_{\pm}),$
$(a_{\pm},a_{\pm},$$a_{\mp}),$
$(a_{\mp},$$a_{\pm},$$a_{\pm}),$
$(b_{\pm},b_{\pm},b_{\pm}),$
$(b_{\pm},b_{\pm},b_{\mp}),$
$(b_{\mp},b_{\pm},b_{\pm})\}$.
\end{theorem}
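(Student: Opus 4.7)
The plan is to parallel the proof of Theorem~\ref{thmt1}, upgrading it so that the extra letters of $\alpha_{\ast}$ simultaneously encode over/under information and a sign at each crossing. Given a pointed ordered oriented $k$-component link diagram $D$, I would traverse the $i$-th component from its base point along its orientation and list the crossings in the order they are met; since every double point is visited exactly twice, this produces a Gauss phrase $(w_{1}|\cdots|w_{k})$ whose letters are in bijection with the crossings of $D$. The construction is completely analogous to the underlying flat case used for Theorem~\ref{thmt1}, so forgetting the over/under data recovers Turaev's map to $\mathcal{P}_{k}(\alpha_{0},S_{0})$.

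To define $|\cdot|:\mathcal{A}\to\alpha_{\ast}$, at each visit to a crossing $X$ I would record (a) whether the strand being traversed is the over-strand, using the letters $a_{\pm}$, or the under-strand, using $b_{\pm}$, and (b) the sign of $X$ in the usual convention, encoded by the $\pm$ subscript. A short local check of the four oriented-diagram pictures then forces the other visit to carry label $\tau_{\ast}(a_{\pm})=b_{\mp}$, so the output really is a nanophrase over $(\alpha_{\ast},\tau_{\ast})$. Stable homeomorphism of link diagrams only relabels crossings while preserving orientations, origins, order and over/under data, hence it corresponds precisely to isomorphism of the resulting nanophrases.

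The core step is to match the three homotopy moves with the oriented Reidemeister moves. R1 produces a crossing at which one strand goes over and under itself, giving a repeated letter $AA$ in the phrase, which is move~(1). R2 produces two crossings whose two strands play opposite over/under roles and have opposite signs; one verifies case-by-case that the two new letters $A,B$ satisfy $|B|=\tau_{\ast}(|A|)$, so the situation is exactly move~(2). R3 requires enumerating the oriented variants together with the three over/under choices, and checking that the triples $(|A|,|B|,|C|)$ that actually occur are exactly the twelve elements displayed in $S_{\ast}$. I would then construct the inverse map as in Theorem~\ref{thmt1}: from a nanophrase $(\mathcal{A},P)$ build a $4$-valent graph whose local cyclic order is read off from $P$, thicken it to an oriented surface (unique up to stable homeomorphism), and resolve each vertex to a crossing whose over/under strand and sign are dictated by the label in $\alpha_{\ast}$. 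The checks that the two constructions are mutual inverses are routine once the correspondence of moves is established.

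The main obstacle is the Reidemeister~III case analysis: one must simultaneously track three orientations, three over/under choices and three signs, read off the associated triple of labels, and verify that the set of triples produced by \emph{some} local R3 picture coincides exactly with the six unsigned families listed in $S_{\ast}$. A useful organizing principle is Lemma~1.1, whose three forms of the R3 relation absorb the sign flips implicit in the $\tau_{\ast}$-arguments of $S_{\ast}$, so that only one representative in each $\tau_{\ast}$-orbit needs to be verified by a direct picture; the remainder follows from that lemma. The rest of the proof is a bookkeeping parallel to Theorem~\ref{thmt1}.
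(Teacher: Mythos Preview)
The paper does not actually prove Theorem~\ref{thmt2}: it is stated as a result of Turaev and attributed to \cite{tu2} (``Then Turaev proved the following theorem''), with no argument given here. So there is no in-paper proof to compare against; your sketch is essentially a reconstruction of Turaev's argument, and its overall architecture (Gauss-phrase encoding of the diagram, verification that stable homeomorphism becomes isomorphism, matching R1--R3 with moves (1)--(3), and the reverse thickening construction) is the standard one.

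One point deserves tightening. In a nanophrase each letter $A$ carries a \emph{single} projection $|A|\in\alpha_{\ast}$, not one label per visit; your phrasing ``at each visit \ldots I would record'' and ``the other visit carries label $\tau_{\ast}(|A|)$'' blurs this. The correct statement is that $|A|$ is determined by the data seen at the \emph{first} passage through the crossing (over/under together with the local orientation of the tangent pair, exactly as in the flat case of Remark~\ref{rem1}), and the role of $\tau_{\ast}$ is only to govern move~(2). In particular, the subscript $\pm$ is not simply ``the sign of the crossing'': with your reading, the second passage through the \emph{same} crossing would have the same sign but opposite over/under, giving $b_{\pm}$ rather than $\tau_{\ast}(a_{\pm})=b_{\mp}$. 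Once you fix the labeling convention so that it matches the one forced by $\tau_{\ast}$ and by the shape of $S_{\ast}$, the rest of your plan (including using Lemma~2.1 to reduce the R3 case analysis) goes through.
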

\subsection{Geometric meanings of generalized homotopy of nanophrases.}
\label{gmqrs}
Now we describe geometric meanings of generalized homotopy
of nanophrases. To do this we prepare some notations.
For an alphabet $\alpha$ and an involution $\tau:\alpha \rightarrow \alpha$
and a set $S \subset \alpha \times \alpha \times \alpha$,
we set 
$\alpha_{k} = \{ a_{ij} | a \in \alpha, i \le j \in \hat{k} \}$, 
$\tau_{k}:\alpha_{k} \rightarrow \alpha_{k}$ 
such that $\tau_{k}(a_{ij}) = \tau(a)_{ij}$ 
for each $a_{ij} \in \alpha_{k}$, 
$Q_{\alpha,k} = \{a_{ii}|a \in \alpha,  i \in \hat{k} \}$,
$R_{\alpha,k} = \{(a_{ij}, \tau(a)_{ij})| a \in \alpha, i \le j \in \hat{k} \}$
and
$S_{\alpha,k} = \{(a_{ij},b_{il},c_{jl})|(a,b,c) \in S, 1 \le i \le j \le 
l \le \hat{k} \}$.
Then we obtain following theorems. 
\begin{theorem}\label{thm1}
Let $\alpha_{0} = \{a,b\}$ and  $S_{0}=\{(a,a,a),(b,b,b)\}$.
There is a canonical injection between 
$\mathcal{C}_{k}$ and 
$\mathcal{N}((\alpha_{0})_{k},Q_{\alpha_{0},k},
R_{\alpha_{0},k},(S_{0})_{\alpha_{0},k})$.
\end{theorem}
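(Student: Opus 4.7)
The plan is to factor the desired injection through Turaev's bijection $\mathcal{C}_{k} \cong \mathcal{P}_{k}(\alpha_{0},S_{0})$ supplied by Theorem~\ref{thmt1}, and to exhibit an injective ``flattening''
\[
  \Phi\colon \mathcal{P}_{k}(\alpha_{0},S_{0}) \longrightarrow
  \mathcal{N}\bigl((\alpha_{0})_{k},Q_{\alpha_{0},k},R_{\alpha_{0},k},
    (S_{0})_{\alpha_{0},k}\bigr).
\]
Given a nanophrase $(\mathcal{A},(w_{1}|\cdots|w_{k}))$ over $\alpha_{0}$, each letter $A\in\mathcal{A}$ appears once in some $w_{i}$ and once in some $w_{j}$ with a unique $i\le j$; I refine the projection to $|A|':=(|A|)_{ij}\in(\alpha_{0})_{k}$, keep the underlying letters, and put $\Phi(\mathcal{A},P) := (\mathcal{A}',w_{1}w_{2}\cdots w_{k})$. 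Composing $\Phi$ with the bijection of Theorem~\ref{thmt1} would then produce the required map.

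I would check $\Phi$ is well-defined on equivalence classes by matching moves one by one. Move~(1) cancels two adjacent copies of a letter, which must lie in a single word $w_{i}$, so $|A|'=a_{ii}\in Q_{\alpha_{0},k}$. Move~(2) cancels pairs $AB$ and $BA$ lying in two (possibly equal) words $w_{i},w_{j}$ with $i\le j$, and the hypothesis $|B|=\tau_{0}(|A|)$ translates to $(|A|',|B|')=(a_{ij},\tau(a)_{ij})\in R_{\alpha_{0},k}$. For move~(3), tracking the three sub-pairs $AB$, $AC$, $BC$ into their (possibly coinciding) host words yields $|A|'=a_{ij}$, $|B|'=b_{il}$, $|C|'=c_{jl}$ with $i\le j\le l$ and $(a,b,c)\in S_{0}$, which is precisely the defining condition of $(S_{0})_{\alpha_{0},k}$. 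Isomorphism of nanophrases passes to isomorphism of flattenings tautologically.

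Injectivity rests on a canonical left-inverse to $\Phi$ on its image. For any nanoword $(\mathcal{A}',w)$ over $(\alpha_{0})_{k}$ I assign to each position $p$ the index $i$ (resp.\ $j$) from the projection $a_{ij}$ of its letter, according to whether $p$ is the earlier or the later occurrence, so that $(\mathcal{A}',w)$ lies in $\mathrm{Image}(\Phi)$ exactly when the resulting component sequence is non-decreasing along $w$; in that case, placing $|$-markers at every index jump recovers the original nanophrase. I would then verify that each $(Q,R,S)$-homotopy move applied to a nanoword in $\mathrm{Image}(\Phi)$ preserves the image and induces, via this canonical recovery, a single $S_{0}$-homotopy move on the associated nanophrase. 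The main obstacle is handling inverse moves of types~(1) and~(2), which can a priori insert letters whose projections break monotonicity of the component sequence; one has to show that any such excursion outside $\mathrm{Image}(\Phi)$ in a $(Q,R,S)$-homotopy between $\Phi(P_{1})$ and $\Phi(P_{2})$ can be rerouted by an equivalent sequence of image-preserving moves, which then lifts stepwise to an $S_{0}$-homotopy $P_{1}\simeq_{S_{0}} P_{2}$. Combined with Theorem~\ref{thmt1}, this yields the desired canonical injection.
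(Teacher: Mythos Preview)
Your approach is essentially identical to the paper's: the paper proves the general statement (Theorem~\ref{thm3}) that there is a canonical injection $\mathcal{P}_{k}(\alpha,S)\hookrightarrow\mathcal{N}(\alpha_{k},Q_{\alpha,k},R_{\alpha,k},S_{\alpha,k})$ via exactly your flattening map $\Phi$ (called $\varphi$ there), checks well-definedness move by move just as you do, and then obtains Theorem~\ref{thm1} by composing with Turaev's bijection of Theorem~\ref{thmt1}. So the construction and the well-definedness half are the same.

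Regarding injectivity, your observation about excursions outside $\mathrm{Image}(\Phi)$ under inverse moves of types~(1) and~(2) is accurate: one can, for instance, insert $AA$ with $\|A\|=a_{ii}$ at a position whose component index is not $i$, producing a nanoword that fails the monotonicity condition and hence is not $\varphi$ of any nanophrase. The paper's injectivity argument, however, does not address this either; it simply writes each move as $\varphi(P_{1})\to\varphi(P_{2})$, implicitly assuming both endpoints lie in the image, and then lifts the move. In other words, the very gap you flag is present in the paper's own proof and is not resolved there. Your proposal is therefore at the same level of rigor as the paper's argument, with the added merit that you have explicitly identified where a further ``rerouting'' lemma would be needed to make the injectivity proof complete.
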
 
\begin{theorem}\label{thm2}
Let $\alpha_{\ast} = \{a_{+},a_{-},b_{+},b_{-}\}$ and 
$S_{\ast}$$=$$\{(a_{\pm},a_{\pm},a_{\pm}),$
$(a_{\pm},a_{\pm},$$a_{\mp}),$
$(a_{\mp},a_{\pm},$\\
$a_{\pm}),$
$(b_{\pm},b_{\pm},b_{\pm}),$
$(b_{\pm},b_{\pm},b_{\mp}),$
$(b_{\mp},b_{\pm},b_{\pm})\}$.
There is a canonical injection between 
$\mathcal{C}_{k}$ and 
$\mathcal{N}((\alpha_{\ast})_{k},Q_{\alpha_{\ast},k},
R_{\alpha_{\ast},k},(S_{\ast})_{\alpha_{\ast},k})$.
\end{theorem}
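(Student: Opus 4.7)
The plan is to reduce Theorem \ref{thm2} to Theorem \ref{thmt2} in exactly the way one reduces Theorem \ref{thm1} to Theorem \ref{thmt1}. Setting $(\alpha,\tau) = (\alpha_\ast,\tau_\ast)$ and $S = S_\ast$, I will construct an injective map
\[
\Phi\colon \mathcal{P}_k(\alpha,S)\;\hookrightarrow\;\mathcal{N}(\alpha_k, Q_{\alpha,k}, R_{\alpha,k}, S_{\alpha,k}),
\]
and then compose it with the canonical bijection of Theorem \ref{thmt2}. The map $\Phi$ flattens a $k$-component nanophrase into a single nanoword, while recording the lost component-membership of each letter in a new subscript drawn from the auxiliary alphabet $\alpha_k$.

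First I would define $\Phi$ on representatives. For a nanophrase $(\mathcal{A},(w_1|\cdots|w_k))$ over $\alpha$, every $A\in\mathcal{A}$ occurs exactly twice; let $i_A\le j_A$ in $\hat{k}$ be the indices of the (possibly equal) component-words containing those two occurrences. Form the $\alpha_k$-alphabet $\mathcal{A}'=\{A_{i_A j_A}\}_{A\in\mathcal{A}}$ with projection $|A_{i_A j_A}|=|A|_{i_A j_A}$, and set $\Phi(\mathcal{A},(w_1|\cdots|w_k))=(\mathcal{A}',w_1'\cdots w_k')$, where $w_i'$ is obtained from $w_i$ by the relabelling $A\mapsto A_{i_A j_A}$. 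The result is a Gauss word on $\mathcal{A}'$, hence a nanoword. I would then verify move by move that each $S$-move on the nanophrase induces a legal $(Q_{\alpha,k},R_{\alpha,k},S_{\alpha,k})$-move on the image: a type (1) move removes $AA$ inside some $w_i$, so $A$ carries index $(i,i)\in Q_{\alpha,k}$; a type (2) move $xAByBAz$ with $|B|=\tau(|A|)$ forces $A$ and $B$ to occupy the same pair of components, giving $(A_{ij},\tau(|A|)_{ij})\in R_{\alpha,k}$; and a type (3) move on $A,B,C$ with $(a,b,c)\in S$ forces the interlocking index pattern $(i,j),(i,l),(j,l)$ with $i\le j\le l$, matching the definition of $S_{\alpha,k}$.

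The crux, and the step I expect to be the main obstacle, is injectivity of $\Phi$. I would build an inverse on homotopy classes as follows: given an indexed nanoword $W$ over $\alpha_k$, the subscript on each letter tells us the pair of component-segments to which its two occurrences belong. One inserts $k-1$ phrase-separators into $W$ in any way compatible with these indices, namely so that for every letter $A_{ij}$ the first occurrence lies in segment $i$ and the second in segment $j$. Such admissible insertions exist by induction on $k$, and any two of them differ by swaps of adjacent letters whose component-labels are disjoint; these swaps lift either to an isomorphism of nanophrases or to a trivial sequence of $S$-moves. The induced map on equivalence classes is then a well-defined inverse to $\Phi$, proving injectivity and, upon composition with Theorem \ref{thmt2}, yielding the desired canonical injection.
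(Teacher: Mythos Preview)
Your overall plan and the construction of $\Phi$ are exactly the paper's: the paper proves the general injection $\mathcal{P}_k(\alpha,S)\hookrightarrow\mathcal{N}(\alpha_k,Q_{\alpha,k},R_{\alpha,k},S_{\alpha,k})$ as Theorem~\ref{thm3} via the same flattening map (there called $\varphi$), and then deduces Theorems~\ref{thm1} and~\ref{thm2} by composing with Theorems~\ref{thmt1} and~\ref{thmt2}. Your move-by-move check of well-definedness matches the paper's as well.

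The injectivity step is where you diverge, and your argument there has a genuine gap. The paper does not build a global section; it argues instead that every $(Q_{\alpha,k},R_{\alpha,k},S_{\alpha,k})$-move applied to a word of the form $\varphi(P_1)$ yields a word of the form $\varphi(P_2)$ with $P_1\simeq_S P_2$, checking each move type in turn. Your alternative --- define a left inverse $\Psi$ by inserting $k-1$ bars compatibly with the subscripts --- breaks because compatible insertions need \emph{not} exist for a general nanoword over $\alpha_k$: with $k=2$, the word $W=BBAA$ with $\|B\|=x_{22}$ and $\|A\|=y_{11}$ (any $x,y\in\alpha$) admits no bar placement putting both $B$'s in component~$2$ and both $A$'s in component~$1$. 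The paper even isolates this phenomenon in a separate Proposition giving conditions (1)--(4) that characterise the image of $\varphi$. So your ``induction on $k$'' cannot succeed as stated, and $\Psi$ is at best defined on the image of $\Phi$; but then you still need the image to be a union of homotopy classes before $\Psi$ can descend, and that is precisely what the paper's move-lifting argument supplies and what your sketch omits. Finally, for a nanoword that \emph{is} in the image the compatible bar insertion is unique, so the sentence about ``any two of them differing by swaps of adjacent letters'' is vacuous there and confused in general (inserting bars never reorders letters).
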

To prove these theorems, we prove a more general statement.
\begin{theorem}\label{thm3}
There is a canonical injection between 
$\mathcal{P}_{k}(\alpha,S)$ and 
$\mathcal{N}(\alpha_{k},Q_{\alpha,k},$ $R_{\alpha,k},$ $S_{\alpha,k})$.
\end{theorem}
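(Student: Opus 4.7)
The plan is to construct the injection $\Phi:\mathcal{P}_k(\alpha,S)\to\mathcal{N}(\alpha_k,Q_{\alpha,k},R_{\alpha,k},S_{\alpha,k})$ explicitly at the level of representatives, verify well-definedness by matching the $S$-moves on nanophrases with the three types of $(Q_{\alpha,k},R_{\alpha,k},S_{\alpha,k})$-moves on nanowords, and prove injectivity by constructing a left inverse on the image. Concretely, for a nanophrase $(\mathcal{A},(w_1|\cdots|w_k))$ over $\alpha$ and each letter $A\in\mathcal{A}$, let $i_A\le j_A$ in $\hat{k}$ denote the indices of the words containing the two occurrences of $A$ (possibly $i_A=j_A$); regard $\mathcal{A}$ as an $\alpha_k$-alphabet with refined projection $A\mapsto |A|_{i_A j_A}$, and let $w=w_1 w_2\cdots w_k$ be the concatenation. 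Set $\Phi(\mathcal{A},(w_1|\cdots|w_k)):=(\mathcal{A},w)$.

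To check well-definedness on equivalence classes, I check the generating moves one by one. An isomorphism of nanophrases visibly yields an isomorphism of nanowords. For $S$-move $(1)$ the adjacent pair $AA$ forces both occurrences to lie in a single word, so $i_A=j_A$ and the refined projection lies in $Q_{\alpha,k}$. For $S$-move $(2)$ the adjacencies $AB$ and $BA$ force the index pairs of $A$ and $B$ to coincide, producing a refined pair $(|A|_{ij},\tau(|A|)_{ij})\in R_{\alpha,k}$. For $S$-move $(3)$, the adjacencies $AB$, $AC$, $BC$ pin the six word-indices down to the pattern $(i,i,j,j,l,l)$ with $i\le j\le l$, so the refined triple is $(|A|_{ij},|B|_{il},|C|_{jl})$, which lies in $S_{\alpha,k}$ exactly when $(|A|,|B|,|C|)\in S$. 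Hence $\Phi$ descends to a map $\bar\Phi:\mathcal{P}_k(\alpha,S)\to\mathcal{N}(\alpha_k,Q_{\alpha,k},R_{\alpha,k},S_{\alpha,k})$.

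For injectivity I build a partial inverse $\Psi$ as follows. Given $(\mathcal{B},v)$ over $\alpha_k$, define a segment function $\sigma$ on the positions of $v$ by $\sigma(p)=i$ when position $p$ is the first occurrence of some $X\in\mathcal{B}$ with $|X|=c_{ij}$ and $\sigma(p)=j$ when it is the second. Call $v$ \emph{admissible} if $\sigma$ is non-decreasing; then $v$ splits uniquely into $(w_1|\cdots|w_k)$ with $w_i$ equal to the maximal block where $\sigma=i$ (empty words allowed), and forgetting the subscripts on the projections produces a nanophrase $\Psi(\mathcal{B},v)$ over $\alpha$. By construction each $\Phi(P)$ is admissible and $\Psi(\Phi(P))=P$ on representatives, and the case analysis above, read in reverse, shows that every $(Q_{\alpha,k},R_{\alpha,k},S_{\alpha,k})$-homotopy move between admissible nanowords descends to an $S$-homotopy move between the corresponding nanophrases.

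The main technical obstacle is that an inverse $(Q_{\alpha,k},R_{\alpha,k},S_{\alpha,k})$-move may leave the admissible locus: an inverse move $(1)$ inserting $XX$ with $|X|=c_{ii}$ at a position where $\sigma$ ceases to be monotone breaks admissibility of the intermediate nanoword, and analogous phenomena arise for the inverses of moves $(2)$ and $(3)$. I would handle this by showing that any $(Q_{\alpha,k},R_{\alpha,k},S_{\alpha,k})$-homotopy between two admissible endpoints can be replaced by one whose intermediates are all admissible: the insertion in question can always be performed inside the appropriate segment $i$ (identifiable from the subscripts on the letters introduced, and available after possibly inflating by an empty segment), so one may shift each inadmissible insertion to an admissible one without altering the endpoint. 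Once this rectification is in place, $\Psi$ is well-defined on the $(Q_{\alpha,k},R_{\alpha,k},S_{\alpha,k})$-equivalence classes meeting the image of $\bar\Phi$, the identity $\Psi\circ\bar\Phi=\mathrm{id}$ follows, and therefore $\bar\Phi$ is injective, proving Theorem~\ref{thm3}.
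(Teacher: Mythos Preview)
Your construction of $\Phi$ and the move-by-move verification of well-definedness match the paper's map $\varphi$ exactly. For injectivity you and the paper pursue the same strategy: lift each $(Q_{\alpha,k},R_{\alpha,k},S_{\alpha,k})$-move on the image back to an $S$-move on nanophrases. The paper simply writes every step of such a homotopy as $\varphi(P_1)\to\varphi(P_2)$ and checks the lift, tacitly assuming each intermediate nanoword is already in the image of $\varphi$; you are more careful in flagging that inverse moves of type (1) and (2) can leave the admissible locus.

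That said, your proposed rectification has a genuine gap. You assert that an inadmissible inverse-(1) or inverse-(2) insertion can be relocated into the correct segment ``without altering the endpoint,'' but the inserted letters may subsequently participate in a type-(3) move, or pair off in a type-(2) move, with letters that are adjacent to them only at the original (inadmissible) position. Shifting the insertion destroys those adjacencies, so downstream moves in the chain may no longer be applicable and the endpoint is no longer guaranteed to agree. A correct argument would require either an inductive scheme that propagates the relocation through all subsequent moves, or an alternative device such as extending $\Psi$ to a map on \emph{all} of $\mathcal{N}(\alpha_k,Q_{\alpha,k},R_{\alpha,k},S_{\alpha,k})$ that is constant on homotopy classes (which is not straightforward, since a non-admissible word has no canonical splitting into $k$ components). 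The paper's proof carries the same lacuna without acknowledging it, so your proposal is at least honest about where the difficulty lies, but as written neither argument closes the injectivity claim.
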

\begin{proof}
The method of a making nanoword $\varphi(P)$ over $\alpha$ 
from nanophrase $P$ of length $k$ over $\alpha$  
is as follows. Let $P$ be 
$(\mathcal{A},P=(w_{1}|\cdots|w_{k}))$. Then the 
nanoword over $\alpha$ $\varphi(P)$ is defined by
$\varphi(P)$ $=$ $(\mathcal{A}^{\prime},w)$ where the $\alpha$-alphabet
$\mathcal{A}^{\prime}$ is equal to $\mathcal{A}$ as a set,
the projection of $\mathcal{A}^{\prime}$ is defined by 
$|A|_{\mathcal{A}^{\prime}}=(|A|_{\mathcal{A}})_{ij}$ for each 
$A \in \mathcal{A}^{\prime}$ which appears in the $i$-th component
and the $j$-th component in the phrase.
The word $w$ is defined by $w=w_{1}w_{2}\cdots w_{k}$. 
In the remaining of this paper 
we denote $|A|_{\mathcal{A}^{\prime}}$ by $\|A\|$
and $|A|_{\mathcal{A}}$ by $|A|$. 
We put this correspondence $\varphi$.\par
Now we show $\varphi$ induces an injection
$\varphi_{\bullet}:\mathcal{P}_{k}(\alpha,S) \rightarrow  
\mathcal{N}(\alpha_{k},Q_{\alpha,k},R_{\alpha,k},S_{\alpha,k})$.\par
$\bullet$ On well-definedness of $\varphi_{\bullet}$.\par
We show if nanophrases $P_{1}$ and $P_{2}$ are $S$-homotopic,
then $\varphi(P_{1})$ and $\varphi(P_{2})$ are 
$(Q_{k},R_{k},S_{k})$-homotopic.\par
It is clear that if $P_{1}$ and $P_{2}$ are isomorphic,
then $\varphi(P_{1})$ and $\varphi(P_{2})$ are isomorphic.\par
Consider the first homotopy move 
$$P_{1}:=(\mathcal{A}, (xAAy)) \longrightarrow P_{2}:=(\mathcal{A} \setminus
\{A\}, (xy))$$
where $x$ and $y$ are words on $\mathcal{A}$, 
possibly including "$|$" character.\par 
Then $\varphi(P_{1})$ is equal to $xAAy$ and $\varphi(P_{2})$ is 
equal to $xy$. 
In this case two $A$ belong to the same component of $P$.
This implies $\|A\|$ is equal to $|A|_{ii}$ for some $i \in \hat{k}$,
in other words, $\|A\| \in Q_{\alpha,k}$. 
Thus we obtain
$$\varphi(P_{1})=xAAy \simeq_{(Q,R,S)} xy = \varphi(P_{2}).$$
Consider the second homotopy move
$$P_{1}:=(\mathcal{A}, (xAByBAz)) \longrightarrow 
P_{2} := (\mathcal{A} \setminus \{A,B\}, (xyz))$$
where $|A|$ is equal to $\tau(|B|)$, 
and $x$, $y$ and $z$ are words on $\mathcal{A}$
possibly including "$|$" character.\par  
Suppose $A \in \rm{Im} (w_{i}) \cap \rm{Im} (w_{j})$.
Then $B \in \rm{Im}(w_{i}) \cap \rm{Im}(w_{j})$.
Thus $\|A\|$ is equal to $|A|_{ij}$ and $\|B\|$ is equal to
$|B|_{ij}$ by the definition of $\| \cdot \|$. In other words,
$\|A\|$ is equal to $\tau_{k}(\|B\|)$.
This implies 
$$\varphi(P_{1})=xAByBAz \simeq_{(Q,R,S)} xyz = \varphi(P_{2}).$$
Consider the third homotopy move 
$$P_{1}:= (\mathcal{A},(xAByACzBCt)) \rightarrow 
P_{2}:=(\mathcal{A},(xBAyCAzCBt))$$
where $(|A|,|B|,|C|) \in S$, and $x$, $y$, $z$ and $t$ are words on 
$\mathcal{A}$ possibly including "$|$" character.\par
Suppose $A \in \rm{Im}(w_{i}) \cap \rm{Im}(w_{j})$,
$B \in \rm{Im}(w_{i}) \cap \rm{Im}(w_{l})$ and
$C \in \rm{Im}(w_{j}) \cap \rm{Im}(w_{l})$ for 
$1 \le i \le j \le l \le k$. 
Then $\|A\|$ is equal to $|A|_{ij}$, 
$\|B\|$ is equal to $|B|_{il}$ 
and $\|C\|$ is equal to $|C|_{jl}$
by the definition of $\| \cdot \|$.
From the assumption $(|A|,|B|,|C|) \in S$ and $1 \le i \le j \le l \le k$,
we obtain $(\|A\|,\|B\|,\|C\|) \in S_{\alpha,k}$.
Thus 
$$\varphi(P_{1})=xAByACzBCt \simeq_{(Q,R,S)} xBAyCAzCBt = \varphi(P_{2}).$$
By the above $\varphi_{\bullet}$ is well-defined map
between $\mathcal{P}_{k}(\alpha,S)$ and
$\mathcal{N}(\alpha_{k},Q_{\alpha,k},R_{\alpha,k},S_{\alpha,k})$.\par
$\bullet$ On injectibity of $\varphi_{\bullet}.$\par
By the definition of $\varphi$, if $\varphi(P_{1})$ and $\varphi(P_{2})$
are isomorphic, then $P_{1}$ and $P_{2}$ are isomorphic.\par
Consider the first homotopy move 
$$\varphi(P_{1})=(\mathcal{A}^{\prime}, xAAy) \longrightarrow 
\varphi(P_{2})=(\mathcal{A}^{\prime} \setminus \{A\}, xy)$$
if $\|A\| \in Q_{\alpha,k}$ and $x$ and $y$ are words on $\mathcal{A}^{\prime}$, 
Put $\|A\| = x_{ii}$ where $x \in \alpha$ and $i \in \hat{k}$.
Then by the definition of the subscript of $\|A\|$, 
First $A$ and Second $A$ appear same component of $P_{1}$.
This implies $P_{1}$ form   
$(\mathcal{A},(x^{\prime}AAy^{\prime}))$
where $x^{\prime}$ and $y^{\prime}$ are words on $\mathcal{A}$
possibly including "$|$" character.
Thus we obtain
$$P_{1}=(x^{\prime}AAy^{\prime}) \simeq_{S} (x^{\prime}y^{\prime})
=P_{2}.$$
Consider the second homotopy move
$$\varphi(P_{1})=(\mathcal{A}^{\prime}, xAByBAz) \longrightarrow 
\varphi(P_{2})=(\mathcal{A}^{\prime} \setminus \{A,B\}, xyz)$$
where $\|A\|$ is equal to $\tau_{k}(\|B\|)$, 
and $x$, $y$ and $z$ are words on 
$\mathcal{A}^{\prime}$.
Put $\|A\| = |A|_{ij}$ for some $i,j \in \hat{k}$. 
Then $\|B\|$ is equal to $|B|_{ij}$.
Thus $P_{1}$ form $(\mathcal{A}, x^{\prime}ABy^{\prime}BAz^{\prime})$
where $x^{\prime}$, $y^{\prime}$ and $z^{\prime}$ are words on $\mathcal{A}$
possibly including "$|$" character.
Moreover since $|A|_{ij} =\tau_{k}(|B|_{ij})= \tau(|B|)_{ij}$,
we obtain $|A|$ is equal to $\tau(|B|)$. Therefore we can apply the second
homotopy moves to $P_{1}$ : 
$$P_{1}=(x^{\prime}ABy^{\prime}BAz^{\prime}) \simeq_{S} 
(x^{\prime}y^{\prime}z^{\prime})=P_{2}.$$  
Consider the third homotopy move 
$$\varphi(P_{1})= (\mathcal{A}^{\prime},xAByACzBCt) \rightarrow 
\varphi(P_{2})=(\mathcal{A}^{\prime},xBAyCAzCBt)$$
where $(\|A\|,\|B\|,\|C\|) \in S_{\alpha,k}$, 
and $x$, $y$, $z$ and $t$ are words on 
$\mathcal{A}^{\prime}$.\par
By the definition of $S_{\alpha,k}$, we have
$\|A\|$ is equal to $|A|_{ij}$,
$\|B\|$ is equal to $|B|_{il}$ and
$\|C\|$ is equal to $|C|_{jl}$ for some $1 \le i \le \j \le l \le k$.
This implies 
$P_{1}$ form $(\mathcal{A},x^{\prime}ABy^{\prime}ACz^{\prime}BCt^{\prime})$
and
$P_{2}$ form $(\mathcal{A},x^{\prime}BAy^{\prime}CAz^{\prime}CBt^{\prime})$
where $x^{\prime}$, $y^{\prime}$, $z^{\prime}$ and $t^{\prime}$ 
are words on $\mathcal{A}$ possibly including "$|$" character.
Note that $(|A|,|B|,|C|) \in S$ by the definition of $S_{\alpha,k}$.
Thus we can apply the third homotopy move to $P_{1}$:
$$P_{1}= (\mathcal{A},x^{\prime}ABy^{\prime}ACz^{\prime}BCt^{\prime})
\simeq_{S} (\mathcal{A},x^{\prime}BAy^{\prime}CAz^{\prime}CBt^{\prime})
=P_{2}.$$
By the above $\varphi_{\bullet}$ is injection.\par
Now we completed the proof.
\end{proof}
We have Theorems \ref{thm1} and \ref{thm2} as corollaries of 
Theorems \ref{thmt1}, \ref{thmt2} and \ref{thm3}. \par
On the other hand, for a nanoword over $\alpha_{k}$,
the necessary and sufficient condition for existence of a corresponding
nanophrase over $\alpha$ is described as follows.
\begin{proposition}
Let $\alpha$ be an alphabet and $(\mathcal{A}^{\prime},w)$ 
be a nanoword over $\alpha_{k}$. 
Then there exists nanophrase of length $k$ over $\alpha$ such that
$(\mathcal{A},\varphi(P))$ is equal to $(\mathcal{A}^{\prime},w)$ 
if and only if $w$ satisfies 
conditions (1)-(4): Let $i_{A} := min\{w^{-1}(A)\}$ and 
$j_{A} := max\{w^{-1}(A)\}$ for each $A \in \mathcal{A}^{\prime}$, and 
$\|A\|=x_{m_{A}n_{A}}$ and $\|B\|=y_{m_{B}n_{B}}$ for some $x,y \in \alpha$.
Consider two letters $A, B \in \mathcal{A}^{\prime}$. Then \\
(1) If $i_{A} \le i_{B}$, then $m_{A} \le m_{B}$,\\  
(2) If $i_{A} \le j_{B}$, then $m_{A} \le n_{B}$,\\
(3) If $j_{A} \le i_{B}$, then $n_{A} \le m_{B}$,\\
(4) If $j_{A} \le j_{B}$, then $n_{A} \le n_{B}$.
\end{proposition}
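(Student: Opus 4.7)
The plan is to characterize exactly when the component-index data encoded in the subscripts $\|A\|=x_{m_A n_A}$ is consistent with a genuine partition of $w$ into $k$ consecutive subwords, and to show that conditions (1)--(4) are the precise combinatorial constraints that make such a splitting possible. Throughout, $n$ denotes the length of $w$, so $\{i_A,j_A\}=w^{-1}(A)$ with $i_A<j_A$ for each $A\in\mathcal{A}'$, and every position of $w$ is either the first or the second occurrence of a uniquely determined letter.

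For the necessity direction, suppose $(\mathcal{A}',w)=\varphi(P)$ for some nanophrase $P=(\mathcal{A},(w_1|\cdots|w_k))$. The concatenation $w=w_1w_2\cdots w_k$ determines a non-decreasing component function $c:\hat{n}\to\hat{k}$ sending each position of $w$ to the index of the subword containing it. By the definition of $\varphi$ and of $\|\cdot\|$, the two occurrences of a letter $A$ lie in components $m_A$ and $n_A$ with $m_A\le n_A$; the earlier one (at position $i_A$) lies in the lower-indexed component and the later one (at position $j_A$) in the higher-indexed one, so $c(i_A)=m_A$ and $c(j_A)=n_A$. Applying $c(p)\le c(q)$ whenever $p\le q$ to each of the four pairs $(i_A,i_B)$, $(i_A,j_B)$, $(j_A,i_B)$, $(j_A,j_B)$ immediately yields conditions (1)--(4).

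For sufficiency, assume $w$ satisfies (1)--(4). Define $c:\hat{n}\to\hat{k}$ by $c(i_A):=m_A$ and $c(j_A):=n_A$ for each $A\in\mathcal{A}'$; this is well-defined by the observation above. The crux of the argument is to show that $c$ is non-decreasing. For adjacent positions $p<q$ with $w(p)=A$ and $w(q)=B$, there are four cases according to whether $p\in\{i_A,j_A\}$ and $q\in\{i_B,j_B\}$, and each case is exactly the hypothesis of one of conditions (1)--(4), yielding $c(p)\le c(q)$. Once $c$ is monotone, the preimages $c^{-1}(1),\ldots,c^{-1}(k)$ form consecutive (possibly empty) blocks of positions in $w$, and inserting ``$|$'' separators between them partitions $w$ into a phrase $(w_1|\cdots|w_k)$. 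Let $\mathcal{A}$ denote the $\alpha$-alphabet with the same underlying set as $\mathcal{A}'$ but with projection $|A|_\mathcal{A}:=x$ whenever $\|A\|=x_{m_A n_A}$; then $P:=(\mathcal{A},(w_1|\cdots|w_k))$ is a nanophrase of length $k$ over $\alpha$, and unwinding the definition of $\varphi$ gives $\varphi(P)=(\mathcal{A}',w)$.

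The main technical step is the case analysis establishing monotonicity of $c$ in the sufficiency direction: the four hypotheses correspond bijectively to the four possible configurations (first-first, first-second, second-first, second-second) of consecutive letter occurrences, so each condition rules out exactly one potential descent of $c$. All remaining steps---constructing $\mathcal{A}$, verifying $\varphi(P)=(\mathcal{A}',w)$, and extracting conditions (1)--(4) from the existence of a non-decreasing $c$ in the necessity direction---are routine unwindings of definitions.
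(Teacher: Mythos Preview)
Your argument is correct and rests on the same underlying idea as the paper's proof: both show that the subscripts $m_A,n_A$ assemble into a non-decreasing ``component function'' on the positions of $w$, and that conditions (1)--(4) are exactly what forces this monotonicity. The difference is one of presentation. The paper constructs the preimage phrase by an explicit left-to-right scanning algorithm (Steps 1--4), detecting positions where the relevant subscript strictly increases and inserting the appropriate number of bars and empty words there. You instead package the same data into the single function $c:\hat n\to\hat k$, prove monotonicity directly by the four-case analysis on adjacent positions, and then read off the phrase from the level sets $c^{-1}(1),\ldots,c^{-1}(k)$. Your formulation is cleaner and makes the role of each condition more transparent (each of (1)--(4) rules out one of the four possible descent types of $c$), whereas the paper's algorithmic version has the minor advantage of being overtly constructive. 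Substantively they are the same proof.
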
  
\begin{proof}
It is clear that $\varphi(P)$ satisfies the condition (1) - (4) 
for all $P \in \mathcal{P}_{k}(\alpha)$ by the definition of 
$\varphi$.\par
We show if $w \in \mathcal{N}(\alpha_{k})$ satisfies the condition 
(1) - (4), then there exist $P \in \mathcal{P}_{k}(\alpha)$ such that
$\varphi(P)$ is equal to $w$.\par
Suppose $w \in \mathcal{N}(\alpha_{k})$ satisfies the condition 
(1) - (4). Then we construct a nanophrase of length $k$ over 
$\alpha$ $P$ as follows.\par
(Step 1) We read $w$ from $w(1)$ and find a number $i$ 
which satisfies following conditions: Let
$\|w(i)\| = x_{m_{i}n_{i}}$ and $\|w(i+1)\|=y_{m_{i+1}n_{i+1}}$.
Then \\
$min\{w^{-1}(w(i))\} = i$, $min\{w^{-1}(w(i+1))\} = i+1$ 
and $m_{i} < m_{i+1}$ or \\ 
$min\{w^{-1}(w(i))\} = i$, $min\{w^{-1}(w(i+1))\} < i+1$ 
and $m_{i} < n_{i+1}$ or \\
$min\{w^{-1}(w(i))\} < i$, $min\{w^{-1}(w(i+1))\} = i+1$ 
and $n_{i} < m_{i+1}$ or \\
$min\{w^{-1}(w(i))\} < i$, $min\{w^{-1}(w(i+1))\} < i+1$ 
and $n_{i} < n_{i+1}$. \\
If there is not exists such $i$, then by the condition 
(1) - (4), for all $i,j,k,l \in \widehat{length(w)}$ 
$m_{i}$, $m_{j}$, $n_{i}$ and $n_{j}$ are equal.
We put this number $m$ and consider the nanophrase of length $k$ over $\alpha$,
$P=(\emptyset|\cdots|\emptyset|\stackrel{m}{\check{w}}|
\emptyset|\cdots|\emptyset)$. Then this is the required nanophrase.\\ 
(Step 2)
If there exists such $i$ (denote this $i$ by $i_{1}$), then 
we put the character $|$ into between $w(i)$ and $w(i+1)$.
Moreover if $(m_{i+1}-m_{i})$ ($(m_{i+1}-n_{i})$, $(n_{i+1}-m_{i})$ or  
$(n_{i+1}-n_{i})$) is grater than or equal to two, then 
we put $l-1$'s $\emptyset$ into between $w(i)$ and $w(i+1)$:
$\cdots w(i)|\emptyset|\cdots|\emptyset|w(i+1) \cdots$.\\
(Step 3)
We read $w$ from $w(i+1)$ and repeat (Step 1) and (Step 2) until the end.\\
(Step 4)
If $m_{1}$ is greater than or equal to two, 
then we put $m_{1}-1$'s $\emptyset$ in the left of $w(1)$:
$(\emptyset|\cdots|\emptyset|w(1)|\cdots$.
Moreover if $k-n_{length(w)}$ is greater than or equal to one,
then we put $k-n_{length(w)}$'s $\emptyset$ in the right of $w(length(w))$:
$\cdots w(length(w))|\emptyset|\cdots|\emptyset)$.\par
Then the obtained nanophrase is the required nanophrase over $\alpha$.
\end{proof}
Now we obtain following corollaries.
\begin{corollary}
The set $\{[w] \in \mathcal{N}((\alpha_{0})_{k},Q_{\alpha_{0},k}
R_{\alpha_{0},k},(S_{0})_{\alpha_{0},k})|w \ \rm{satisfies (1)-(4)}\}$ 
where $[w]$ is a homotopy class of $w$ is one to one corresponds to
the set $\mathcal{C}_{k}$.
\end{corollary}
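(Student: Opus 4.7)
The plan is to assemble three results already in hand. Theorem \ref{thmt1} provides a canonical bijection between $\mathcal{C}_{k}$ and $\mathcal{P}_{k}(\alpha_{0}, S_{0})$. Theorem \ref{thm3} provides a canonical injection $\varphi_{\bullet}: \mathcal{P}_{k}(\alpha_{0}, S_{0}) \to \mathcal{N}((\alpha_{0})_{k}, Q_{\alpha_{0},k}, R_{\alpha_{0},k}, (S_{0})_{\alpha_{0},k})$. The preceding Proposition characterizes, for a given nanoword $w$ over $(\alpha_{0})_{k}$, exactly when $w = \varphi(P)$ for some nanophrase $P$, namely by the combinatorial conditions (1)--(4). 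Composing the bijection of Theorem \ref{thmt1} with $\varphi_{\bullet}$ produces a canonical injection from $\mathcal{C}_{k}$ into the larger set of $(Q_{\alpha_{0},k},R_{\alpha_{0},k},(S_{0})_{\alpha_{0},k})$-homotopy classes, and it remains only to identify its image with the subset singled out in the corollary.

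This identification is a short two-way verification. First I would show that if $[w] = \varphi_{\bullet}([P])$ for some $P \in \mathcal{P}_{k}(\alpha_{0}, S_{0})$, then $\varphi(P)$ is a representative of $[w]$ satisfying (1)--(4) by the easy direction of the Proposition; hence $[w]$ belongs to the set displayed in the corollary. Conversely, if some representative $w_{0}$ of $[w]$ satisfies (1)--(4), the constructive direction of the Proposition produces a nanophrase $P$ with $\varphi(P) = w_{0}$, so that $\varphi_{\bullet}([P]) = [w_{0}] = [w]$ and $[w]$ lies in the image of $\varphi_{\bullet}$. Combined with the bijection $\mathcal{C}_{k} \leftrightarrow \mathcal{P}_{k}(\alpha_{0},S_{0})$ from Theorem \ref{thmt1}, this gives the claimed one-to-one correspondence.

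The one subtlety to watch is that conditions (1)--(4) are properties of a particular nanoword and are not manifestly preserved under $(Q,R,S)$-homotopy moves; \emph{a priori} two different representatives of a single class could disagree about whether they satisfy them. I expect this to be the main conceptual concern, but it causes no actual difficulty, because the subset in the statement is defined via the \emph{existence} of a representative satisfying (1)--(4), and this is precisely the condition needed for membership in $\mathrm{Im}(\varphi_{\bullet})$ by the two implications above. Accordingly no separate invariance theorem is required, and the proof reduces to packaging the cited bijection, the cited injection, and the Proposition together.
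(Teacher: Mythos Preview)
Your proposal is correct and matches the paper's intended argument. The paper gives no explicit proof of this corollary (it simply writes ``Now we obtain following corollaries'' after the Proposition), but your assembly of Theorem~\ref{thmt1}, Theorem~\ref{thm3}, and the Proposition is precisely the argument implicit in that remark; your identification and handling of the representative-versus-class subtlety for conditions (1)--(4) is a helpful clarification that the paper omits.
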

\begin{corollary}
The set $\{[w] \in \mathcal{N}((\alpha_{\ast})_{k},Q_{\alpha_{\ast},k}
R_{\alpha_{\ast},k},(S_{\ast})_{\alpha_{\ast},k})|w \ \rm{satisfies (1)-(4)}\}$ 
where $[w]$ is a homotopy class of $w$ 
is one to one corresponds to
the set $\mathcal{L}_{k}$.
\end{corollary}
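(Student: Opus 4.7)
The strategy is to assemble three previously established results in the excerpt: Turaev's bijection of Theorem \ref{thmt2}, the canonical injection $\varphi_\bullet$ from Theorem \ref{thm3}, and the image characterization furnished by the preceding Proposition. The corollary should then drop out by composing the first two maps and then cutting the target down to the image.

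First, Theorem \ref{thmt2} gives a canonical bijection $\Psi : \mathcal{L}_k \to \mathcal{P}_k(\alpha_\ast, S_\ast)$. Next, applying Theorem \ref{thm3} with $\alpha = \alpha_\ast$ and $S = S_\ast$ yields a canonical injection
\[
\varphi_\bullet : \mathcal{P}_k(\alpha_\ast, S_\ast) \longrightarrow \mathcal{N}\bigl((\alpha_\ast)_k,\, Q_{\alpha_\ast,k},\, R_{\alpha_\ast,k},\, (S_\ast)_{\alpha_\ast,k}\bigr).
\]
The composition $\varphi_\bullet \circ \Psi$ is therefore a canonical injection of $\mathcal{L}_k$ into the nanoword set appearing in the corollary, and what is left to check is that its image coincides with the subset displayed there.

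I would read the predicate ``$[w]$ such that $w$ satisfies $(1)$--$(4)$'' as: the homotopy class $[w]$ contains at least one representative satisfying conditions $(1)$--$(4)$. Under this reading, the preceding Proposition (applied to $\alpha_\ast$) asserts that a nanoword over $(\alpha_\ast)_k$ arises as $\varphi(P)$ for some nanophrase $P$ of length $k$ over $\alpha_\ast$ if and only if it satisfies $(1)$--$(4)$. Passing to the quotient, $[w]$ lies in the image of $\varphi_\bullet$ exactly when some representative equals $\varphi(P)$ for some $P$, which by the Proposition is exactly when some representative satisfies $(1)$--$(4)$. Hence the image of $\varphi_\bullet \circ \Psi$ coincides with the subset in the corollary, giving the desired bijection.

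I do not expect any substantial obstacle: the three ingredients have already done the work. The only delicate point is the bookkeeping of the predicate ``$w$ satisfies $(1)$--$(4)$'' on a homotopy class versus on a representative, and once this is agreed to mean ``some representative satisfies $(1)$--$(4)$'', the argument reduces to a one-line composition of Theorem \ref{thmt2}, Theorem \ref{thm3}, and the Proposition.
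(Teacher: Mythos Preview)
Your proposal is correct and matches the paper's own treatment: the paper states this corollary without a separate proof, regarding it as an immediate consequence of Theorem~\ref{thmt2}, Theorem~\ref{thm3}, and the preceding Proposition, which is exactly the composition you spell out. Your remark about interpreting ``$w$ satisfies (1)--(4)'' on the level of homotopy classes is a fair piece of bookkeeping that the paper leaves implicit; note that the injectivity half of the proof of Theorem~\ref{thm3} in fact shows that the image of $\varphi$ is closed under the $(Q_{\alpha_\ast,k},R_{\alpha_\ast,k},(S_\ast)_{\alpha_\ast,k})$-homotopy moves, so the condition is well-defined on classes.
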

\begin{remark}\label{rem1}
In the case $\alpha$ is equal to $\alpha_{0}$ and $S$ is equal to
$S_{o}$, $\varphi_{\bullet}$ is the map
from $\mathcal{C}_{k}$ to $\mathcal{N}((\alpha_{0})_{k},$$ 
Q_{\alpha_{0},k},$$R_{\alpha_{0},k},$ $(S_{0})_{\alpha_{0},k})$ which maps 
$C \in  \mathcal{C}_{k}$ to $\varphi_{\bullet}(C)$ as follows.
Let us label the double points of $C$ by distinct letters
$A_1,\cdots,A_n$. Starting at the base point of the first component 
of $C$ and
following along $C$ in the direction of $C$, we write down the labels 
of double points which we passes until the return to the base point.
Then we obtain a word $w_{1}$ on the alphabet 
$\mathcal{A}=\{A_{1},\cdots,A_{n}\}$.
Similarly we obtain words $w_{j}$ on $\mathcal{A}$ from the
the $j$-th component for each $j \in \hat{k}$.
Let $t_{i}^{1}$ (respectively, $t_{i}^{2}$) be the tangent vector 
to $C$ at the double point
labeled $A_{i}$ appearing at the first (respectively, the second) passage
through this point. Set $|A_{i}|=a_{ij}$, if the pair $(t_i^1,t_i^2)$ is
positively oriented and $A_{i}$ is an intersection of the $i$-th component
and the $j$-th component of $C$, 
and  $|A_i|=b_{ij}$ if the pair $(t_i^1,t_i^2)$ is
negatively oriented and $A_{i}$ is an intersection of the $i$-th component
and the $j$-th component of $C$.
Then we obtain an $(\alpha_{0})_{k}$-alphabet $\mathcal{A}$. 
Finally we obtain the required
nanoword $\varphi(C):=(\mathcal{A},w_{1} w_{2} \cdots w_{k})$.
\end{remark}
Similarly as the Remark \ref{rem1}, we can regard an \emph{ornament} 
as an element of  
$\mathcal{N}$ $((\alpha_{0})_{k}$, $Q_{\alpha_{0},k}$, 
$R_{\alpha_{0},k}$, $S_{orn})$ where $S_{orn}$ is equal to 
$(S_{0})_{\alpha_{0},k}$ $\setminus$ $\{(c_{ij}, c_{il}, c_{jl})$ 
$|c \in \alpha_{0},$ $i \neq j \neq l \neq i \}$
(definition and explanations on ornaments is given in \cite{Va} for example).
In this point of view, we can say the $(Q,R,S)$-homotopy 
theory of nanowords is the common generalization of the 
stable equivalence theory of curves on surfaces and the theory of stable 
equivalence of 
ornaments. Moreover if we consider the case $Q$, $R$ and $S$ are  
equal to the empty set, then $(Q,R,S)$-homotopy classes of nanowords with 
satisfying the conditions (1) - (4) are one-to-one correspond to
the stably homeomorphic classes of ordered, pointed multi-component curves
on surfaces.\par
We can also say that to studying 
$(Q_{\alpha,k},R_{\alpha,k},S_{\alpha,k})$-homotopy of 
nanowords over $\alpha_{k}$ is natural since 
this is a natural generalization of the theory of curves on surfaces.  
\section{$(Q,R,S)$-homotopy Invariants Derived from $S$-homotopy Invariants.}
\label{ext}
In this section we construct $(Q_{\alpha,k},R_{\alpha,k},S_{k})$-homotopy
invariants for nanowords over $\alpha_{k}$ 
from some $S$-homotopy invariants for 
nanophrases of length $k$ over $\alpha$ via correspondence which described 
in the proof of Theorem \ref{thm3}. Moreover we show some of 
them are $S_{k}$-homotopy invariants for nanowords over $\alpha_{k}$.\par
In \cite{tu1} and \cite{tu2} V.Turaev constructed $S$-homotopy
invariants for a set $S \subset \alpha \times \alpha \times \alpha$ which
is strictly larger than the diagonal set (for example minimum length norm,
 $\alpha$-keis and $\alpha$-quandles for nanophrases, etc).
However those invariants are difficult to compute.
On the other hand, invariants which 
defined in this section are calculated easily.
Thus this is an available application of generalized homotopy theory
of words and phrases.
\subsection{Some Simple Invariants.}
First we extend the \emph{linking vector} of nanophrases 
(cf.\cite{fu2} and \cite{gi1}).\par
Let $(\mathcal{A},A_{1}A_{2}\cdots A_{2n})$ be a nanoword 
of length $2n$ over $\alpha_{k}$ where $\mathcal{A}$ is an 
$\alpha_{k}$-alphabet $\{A_{1}, A_{2} \cdots A_{2n}\}$ with  
a projection $\|A_{i}\|$ is equal to $|A_{i}|_{m_{i}n_{i}}$
where $|A_{i}| \in \alpha$. 
Let $\pi(\alpha,\tau)$ be a group which is generated by 
the all elements of $\alpha$ with relations $ab = ba$ and 
$a\tau(a)=1$ where $1$ is the unit element of $\pi(\alpha,\tau)$.
We denote a set 
$\{ A \in \mathcal{A} | \|A\| = x_{ij} \  \rm{for \ some \ x \in \alpha }\}$
by $\mathcal{A}_{ij}$.
Then we define a map 
$\widetilde{lk}:\mathcal{N}(\alpha_{k})$ 
$\longrightarrow \pi(\alpha,\tau)^{\frac{1}{2}n(n-1)}$ as follows:
$$\widetilde{lk}(w)=(\widetilde{l}_{w}(1,2),\widetilde{l}_{w}(1,3),
\cdots , \widetilde{l}_{w}(k-1,k)).$$
where 
$$\widetilde{l}_{w}(i,j) = \prod_{A \in \alpha, \|A\| \in \mathcal{A}_{ij}}|A|.$$
Note that for a nanoword $w$ over $\alpha_{k}$ which is the image of  
the map $\varphi$ (in other words, there exist a nanophrase $P$ of 
length $k$ over $\alpha$ such that $w$ is 
equal to $\varphi(P)$), $\widetilde{lk}(w)$ is coincides to $lk(P)$.
Therefore $\widetilde{lk}$ is an extension of $lk$.   
Then we obtain a following proposition.
\begin{proposition}
The $\widetilde{lk}$ is a $(Q_{\alpha,k},R_{\alpha,k},S_{\alpha,k})$-homotopy 
invariant for nanowords over $\alpha_{k}$.
\end{proposition}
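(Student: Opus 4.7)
The plan is to verify invariance under each of the four operations that generate $(Q_{\alpha,k},R_{\alpha,k},S_{\alpha,k})$-homotopy: isomorphism and the three moves. In each case I will track the effect on the factor $\widetilde{l}_w(i,j)=\prod_{A:\|A\|\in\mathcal{A}_{ij}}|A|$ for a generic index pair $i<j$, and use the defining relations of $\pi(\alpha,\tau)$ (commutativity $ab=ba$ and $a\tau(a)=1$) to absorb the changes.

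\medskip

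\emph{Isomorphism} is immediate, since any bijection $\mathcal{A}_1\to\mathcal{A}_2$ compatible with projections preserves the multiset $\{|A|\mid \|A\|=x_{ij}\}$ for every $i,j$, and hence preserves each $\widetilde{l}_w(i,j)$.

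\medskip

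For the \emph{first move}, the hypothesis is $\|A\|\in Q_{\alpha,k}$, so by definition $\|A\|=x_{ii}$ for some $x\in\alpha$ and some $i\in\hat{k}$. Consequently $A\notin\mathcal{A}_{ij}$ for any pair $i<j$ indexing a coordinate of $\widetilde{lk}$. Deleting $A$ from the word therefore leaves every $\widetilde{l}_w(i,j)$ untouched. For the \emph{third move}, the letters $A,B,C$ are merely permuted in the word; the $\alpha_k$-alphabet, the projections, and in particular the sets $\mathcal{A}_{ij}$ are all preserved. Since $\widetilde{l}_w(i,j)$ depends only on the multiset of letters assigned to $\mathcal{A}_{ij}$ and not on their positions in $w$, it is manifestly invariant.

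\medskip

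The main (and essentially only nontrivial) step is the \emph{second move}. Here $(\|A\|,\|B\|)\in R_{\alpha,k}$, so $\|A\|=x_{ij}$ and $\|B\|=\tau(x)_{ij}$ for some $x\in\alpha$ and some $i\le j$ in $\hat{k}$. If $i=j$, the letters $A,B$ lie outside every $\mathcal{A}_{i'j'}$ with $i'<j'$, so deletion changes nothing. If $i<j$, then $A,B\in\mathcal{A}_{ij}$, and both contribute to $\widetilde{l}_w(i,j)$; no other $\widetilde{l}_w(i',j')$ sees them. Writing $\widetilde{l}_w(i,j)=|A|\cdot|B|\cdot M$ with $M$ the product over the remaining letters of $\mathcal{A}_{ij}$ (this factorization uses only the commutativity relation $ab=ba$ in $\pi(\alpha,\tau)$), we obtain
\[
\widetilde{l}_w(i,j)=x\cdot\tau(x)\cdot M=1\cdot M=M,
\]
which is exactly $\widetilde{l}_{w'}(i,j)$ for the reduced word $w'$. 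Combining the four cases, $\widetilde{lk}$ descends to a well-defined map on $\mathcal{N}(\alpha_k,Q_{\alpha,k},R_{\alpha,k},S_{\alpha,k})$, establishing the proposition.
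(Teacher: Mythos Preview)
Your proof is correct and follows essentially the same approach as the paper: you check invariance under isomorphism and each of the three moves, using that the first move only deletes a letter with diagonal subscript (hence not in any $\mathcal{A}_{ij}$ with $i<j$), that the third move does not alter the projections, and that the second move contributes a factor $x\cdot\tau(x)=1$ in $\pi(\alpha,\tau)$. Your explicit treatment of the subcase $i=j$ in the second move is a small extra detail the paper omits, but the argument is otherwise the same.
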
  
\begin{proof}
It is clear that an isomorphism does not change $\widetilde{lk}$.\par
Consider the first homotopy move 
$$w_{1}:=(\mathcal{A}, xAAy) \longrightarrow w_{2}:=(\mathcal{A} \setminus
\{A\}, xy)$$
where $\|A\| \in Q_{\alpha,k}$, 
$x$ and $y$ are words on $\mathcal{A}$.\par
In this case $A \in \mathcal{A}_{ii}$ does not contribute to
$\widetilde{lk}(w_{1})$ by the definition.\par
Consider the second homotopy move
$$w_{1}:=(\mathcal{A}, xAByBAz) \longrightarrow 
w_{2} := (\mathcal{A} \setminus \{A,B\}, xyz)$$
where $\|A\|$ is equal to $\tau_{k}(\|B\|)$, 
and $x$, $y$ and $z$ are words on $\mathcal{A}$.\par
Assume $\|A\|$ is equal to $|A|_{ij}$, then $\|B\|$ is equal to 
$\tau(|B|)_{ij}$.
It is sufficient to show that $\widetilde{l}_{w_{1}}(i,j)$ is 
equal to $\widetilde{l}_{w_{2}}(i,j)$. 
Note that $|A|$ is equal to $\tau(|B|)$ by the definition of $\| \cdot \|$.
Therefore,
\begin{eqnarray*}
\widetilde{l}_{w_{1}}(i,j) &=& \prod_{X \in \mathcal{A}_{ij}}|X| \\
                           &=& \prod_{X \in \mathcal{A}_{ij} \setminus \{A,B\}}
                                |X| \cdot |A| \cdot |B|\\
                           &=& \prod_{X \in \mathcal{A}_{ij} \setminus \{A,B\}}
                                |X|\cdot|A|\cdot \tau(|A|)\\
                           &=& \prod_{X \in \mathcal{A}_{ij} \setminus \{A,B\}}
                                |X|\\
                           &=& \widetilde{l}_{w_{2}}(i,j).
\end{eqnarray*} 
Thus $\widetilde{lk}(w_{1})$ is equal to $\widetilde{lk}(w_{2})$.\par
Consider the third homotopy move 
$$w_{1}:= (\mathcal{A},xAByACzBCt) \rightarrow 
w_{2}:=(\mathcal{A},xBAyCAzCBt)$$
where $(\|A\|,\|B\|,\|C\|) \in S_{\alpha,k}$, 
and $x$, $y$, $z$ and $t$ are words on $\mathcal{A}$.\par
Note that the third homotopy move does not change the projections of 
letters. Thus the third homotopy move does not change the value of 
$\widetilde{lk}$.\par
Now we completed the proof.   
\end{proof}   
Next we extend the \emph{component length vector} 
(see \cite{fu2} and \cite{gi1}). 
Let $v$ be a nanoword over $\alpha_{k}$.  
Then we define a vector $\widetilde{w}(v) \in (\mathbb{Z}/2\mathbb{Z})^{k}$
as follows.
$$\widetilde{w}(v)=
(\widetilde{w}(1),\widetilde{w}(2),\cdots ,\widetilde{w}(k)),$$
where $\widetilde{w}(l)$ $=$ $\sum_{i < l}$ $Card(\mathcal{A}_{il})$ $+$ 
 $\sum_{l < i}$ $Card(\mathcal{A}_{li})$ modulo $2$ for each 
$l \in \hat{k}$.       
Note that for a nanoword $v$ over $\alpha_{k}$ which is the image of  
the map $\varphi$ (in other words, there exist a nanophrase $P$ of 
length $k$ over $\alpha$ such that $w$ is 
equal to $\varphi(P)$), $\widetilde{w}(v)$ is coincides to $w(P)$.
Thus $\widetilde{w}$ is an extension of $w$.   
Then we obtain a following proposition.  
\begin{proposition}
The $\widetilde{w}$ is a $(Q_{\alpha,k},R_{\alpha,k},S_{\alpha,k})$-homotopy 
invariant for nanowords over $\alpha_{k}$.
\end{proposition}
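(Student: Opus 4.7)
The plan is to mirror the proof just given for $\widetilde{lk}$: I will verify invariance of $\widetilde{w}$ under isomorphism and under each of the three $(Q_{\alpha,k},R_{\alpha,k},S_{\alpha,k})$-homotopy moves in turn. Since $\widetilde{w}(v)$ depends on $v$ only through the numbers $\mathrm{Card}(\mathcal{A}_{ij})$ taken modulo $2$, invariance under isomorphism is immediate, and the third homotopy move, which permutes letters without changing the $\alpha_k$-alphabet or the projection $\|\cdot\|$, also preserves $\widetilde{w}$ for free.

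For the first move, I will use the hypothesis $\|A\|\in Q_{\alpha,k}$ to write $\|A\|=a_{ii}$ for some $i\in\hat k$ and $a\in\alpha$, so that $A\in\mathcal{A}_{ii}$. Because the sums defining each $\widetilde{w}(l)$ range only over off-diagonal cells $\mathcal{A}_{ij}$ with $i<j$, such a diagonal letter contributes to no coordinate of $\widetilde{w}$, and its deletion leaves the vector unchanged.

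The only step with any actual content is the second move $(xAByBAz)\to(xyz)$ with $(\|A\|,\|B\|)\in R_{\alpha,k}$. By the standing assumption $R_{\alpha,k}=\{(a_{ij},\tau(a)_{ij})\}$, the letters $A$ and $B$ carry the same pair of subscripts $(i,j)$; if $i=j$ they lie in a diagonal cell and again contribute nothing to $\widetilde{w}$, while if $i<j$ the only effect of the move on the cell counts is $\mathrm{Card}(\mathcal{A}_{ij})\mapsto\mathrm{Card}(\mathcal{A}_{ij})-2$, which is $0\pmod 2$. In both subcases every coordinate of $\widetilde{w}$ is preserved.

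There is no serious obstacle; the whole argument is essentially a parity bookkeeping exercise. The one place where I will need to be careful is invoking the standing convention from the end of Section 3 that $R$ is the graph of $\tau$, because this is exactly what forces the two letters removed by the second move to live in a single cell $\mathcal{A}_{ij}$ and hence makes the cardinality drop by $2$ rather than by $1$.
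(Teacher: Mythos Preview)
Your proposal is correct and follows essentially the same approach as the paper: invariance under isomorphism and the third move because $\widetilde{w}$ depends only on the cell cardinalities $\mathrm{Card}(\mathcal{A}_{ij})$, the first move because $\|A\|\in Q_{\alpha,k}$ forces $A\in\mathcal{A}_{ii}$ which does not enter any coordinate, and the second move because $R_{\alpha,k}$ forces $A,B\in\mathcal{A}_{ij}$ so the relevant cardinality drops by~$2$. Your treatment is slightly more explicit than the paper's in separating the $i=j$ and $i<j$ subcases of the second move, but the argument is the same parity bookkeeping.
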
  
\begin{proof}
It is clear that isomorphisms does not change the
$\widetilde{w}$.\par
Consider the first homotopy move 
$$w_{1}:=(\mathcal{A}, xAAy) \longrightarrow 
w_{2}:=(\mathcal{A} \setminus \{A\}, xy)$$
where $\|A\| \in Q_{\alpha,k}$, 
$x$ and $y$ are words on $\mathcal{A}$.\par
In this case $A \in \mathcal{A}_{ii}$ does not contribute to
$\widetilde{w}(w_{1})$ by the definition.
Thus we obtain $\widetilde{w}(w_{1})=\widetilde{w}(w_{2})$.\par
Consider the second homotopy move
$$w_{1}:=(\mathcal{A}, xAByBAz) \longrightarrow 
w_{2} := (\mathcal{A} \setminus \{A,B\}, xyz)$$
where $\|A\|$ is equal to $\tau_{k}(\|B\|)$, 
and $x$, $y$ and $z$ are words on $\mathcal{A}$.\par
Let $\|A\| = |A|_{ij}$ and $\|B\| = |B|_{ij}$.
We need to consider the case $i$ is not equal to $j$.
In this case, both $A$ and $B$ are elements of $\mathcal{A}_{ij}$.
Thus $\widetilde{w}(l)$ changes $0$ or $2$ for each $l \in \hat{k}$.
Therefore $\widetilde{w}$ does not change by the second homotopy move.\par
Consider the third homotopy move 
$$w_{1}:= (\mathcal{A},xAByACzBCt) \rightarrow 
w_{2}:=(\mathcal{A},xBAyCAzCBt)$$
where $(\|A\|,\|B\|,\|C\|) \in S_{\alpha,k}$, 
and $x$, $y$, $z$ and $t$ are words on $\mathcal{A}$.\par
Note that the third homotopy move does not change the projections of 
letters. Thus the third homotopy move does not change the value of 
$\widetilde{w}$.\par
Now we completed the proof.   
\end{proof}
\begin{remark}
Note that $(Q_{\alpha,k},R_{\alpha,k},S_{\alpha,k})$-homotopy 
invariants $\widetilde{lk}$ and $\widetilde{w}$ are 
not $S_{\alpha,k}$-homotopy invariants. For example
consider nanowords over $\{a\}_{2}$, 
$w_{1}=AA$ with $\|A\|=a_{12}$ and $w_{2}= \emptyset$.
Then $\widetilde{lk}(w_{1})=a$. On the other hand 
$\widetilde{lk}(w_{2})=1$.
Similarly $\widetilde{w}(w_{1})=(1,1)$ and $\widetilde{w}(w_{2})=(0,0)$.
Thus $\widetilde{lk}$ and $\widetilde{w}$ are not 
$S_{\alpha,k}$-homotopy invariants.  
\end{remark}
\subsection{The invariant $\widetilde{S_{o}}$.}
In the paper \cite{gi1}, A.Gibson defined a homotopy invariant
for Gauss phrases (in other words, a homotopy invariant for nanophrases
over the one-element set) which is called $S_{o}$.
Moreover Gibson showed that Gibson's $S_{o}$ invariant is 
strictly stronger than the invariant $T$ for Gauss phrases
which is defined in \cite{fu1} (see also \cite{fu2}).
In this section, we extend the $S_{o}$ invariant to the 
$(S_{\alpha,k},R_{\alpha,k},S_{\alpha,k})$-homotopy invariant for 
nanowords over $\alpha_{k}$ via correspondence in the proof of 
Theorem \ref{thm3}. Moreover we show the obtained invariant 
is also an $S_{\alpha,k}$-homotopy invariant. To do this, first
we extend Gibson's $S_{o}$ invariant for Gauss phrases to
a homotopy invariant for nanophrases over \emph{any} $\alpha$.  
\subsubsection{An extension of Gibson's $S_{o}$ invariant.}
In this sub-subsection we extend  
Gibson's $S_{o}$ invariant for Gauss phrases to
a homotopy invariant for nanophrases over any alphabet.\par
Let $\alpha$ be an alphabet with an involution 
$\tau:\alpha \rightarrow \alpha$.
Since the set $\alpha$ is a finite set,
we obtain following orbit decomposition of the $\tau$ : 
$\alpha/\tau$ $=$ $\{ \widehat{a_{i_1}}$, $\widehat{a_{i_2}}$, $\cdots 
,\widehat{a_{i_l}},$ $\widehat{a_{i_{l+1}}},$ $\cdots,$ 
$\widehat{a_{i_{l+m}}}\}$, where 
$\widehat{a_{i_j}}:= \{ a_{i_j}, \tau(a_{i_j}) \}$ such that 
$Card(\widehat{a_{i_j}})=2$ for all $j \in \{1,\cdots,l \}$ and 
$Card(\widehat{a_{i_j}})=1$ for all $j \in \{l+1,\cdots,l+m \}$ (we fix
a complete representative system $\{ a_{i_1}, a_{i_2}, \cdots 
,a_{i_l}, a_{i_{l+1}},\cdots, a_{i_{l+m}} \}$ which satisfy the above
condition). We denote a complete representative system which satisfies
above condition $crs(\alpha/\tau)$.
Let $\mathcal{A}$ be a $\alpha$-alphabet.  
For $A \in \mathcal{A}$ we define $\varepsilon(A) \in \{ \pm 1 \} $ by
$$\varepsilon(A):=\begin{cases} 1 \ ( \ if \ |A|=a_{i_j} \ for \ some \
		  j \in \{1,\cdots l+m \} \  ),
 \\ -1 \ ( \ if \ |A|=\tau({a_{i_j}}) \ for \ some \ j \in \{1,\cdots
		   l\} \ ). 
\end{cases}$$
Let $P=(\mathcal{A},(w_1|\cdots|w_k))$ be a nanophrase
over $\alpha$ and $A$ and $B$ be letters in $\mathcal{A}$. 
Let $K_{(i,j)}$ be $\mathbb{Z}$ if $i \le l$ and $j \le l$, 
otherwise $\mathbb{Z} / 2\mathbb{Z}$. 
We denote $K_{(1,1)} \times K_{(1,2)} \times \cdots K_{(1,l+m)} \times K_{(2,1)}
\times \cdots \times K_{(l+m,l+m)}$ by $\prod K_{(i,j)}$. 
Then we define
$\sigma_{p}^{j}(A,B) \in \prod {K_{(i,j)}}$ 
as follows:  
If $A$ and $B$ form
$\cdots A \cdots B \cdots A \cdots B \cdots$ in $P$, 
the second $B$ appears in the $j$-th component of $P$,   
$|A| \in \widehat{a_{i_{p}}}$ and $|B|=a_{i_q}$
for some $p,q \in \{1, \cdots l+m \}$, or 
$\cdots B \cdots A \cdots B \cdots A \cdots$ in $P$, 
the first $B$ appears in the $j$-th component of $P$, 
$|A| \in \widehat{a_{i_{p}}}$ and
$|B|=\tau(a_{i_q})$ for some $p,q \in \{1, \cdots l+m\}$ , then 
$\sigma_{P}^{j}(A,B):= \mathbf{e}_{(p,q)}$
If $\cdots A \cdots B \cdots A \cdots B \cdots$ in $P$,
the second $B$ appears in the $j$-th component of $P$, 
$|A| \in \widehat{a_{i_{p}}}$ and  $|B|=\tau(a_{i_q})$, 
or $\cdots B \cdots A \cdots B \cdots A \cdots$ in $P$,
the first $B$ appears in the $j$-th component of $P$, 
$|A| \in \widehat{a_{i_{p}}}$ and $|B|=a_{i_q}$, then 
$\sigma_{P}^{j}(A,B):=-\mathbf{e}_{(p,q)}$. 
Otherwise $\sigma_{P}^{j}(A,B):=\mathbf{0}$,
where 
$\mathbf{e}_{(p,q)}$ $=$ $(0$ $,\cdots,0$, 
$\stackrel{(p,q)}{\check{1}},0,$ $\cdots,0).$ 
Moreover we define notations $l_{P,j}$, $l_{P}$ as follows:
$$l_{P,j}(A) = \sum_{X \in \mathcal{A}} \sigma_{P}^{j}(A,X) \in \prod K_{(p,q)},$$
$$l_{P}(A) = (l_{P,1}(A),l_{P,2}(A), \cdots ,l_{P,k}(A)) 
\in (\prod K_{(p,q)})^{k}.$$
Furthermore we define a map $(B_{P})_{j}$ from 
$(\prod K_{(p,q)})^{k}$ $\setminus$
$\{\mathbf{0}\}$ to $\mathbb{Z}$
or $\mathbb{Z}/2\mathbb{Z}$ as follows.
First we define \emph{type} of elements in $(\prod K_{(p,q)})^{k}$.
Let $\mathbf{v}$ be an element of $(\prod K_{(p,q)})^{k}$
and $(\mathbf{v})_{i,r,s}$ be an element in $\mathbf{v}$ corresponding
to i th. $K_{(r,s)}$ in  $(\prod K_{(p,q)})^{k}$. 
Then $\mathbf{v}$ is \emph{type (i)} if $(\mathbf{v})_{i,r,s}$ is 
not equal to $0$ only if $r$ is less than or equal to $l$ for 
all $i$, $r$, $s$. 
We say $\mathbf{v}$ is \emph{type (ii)} if $(\mathbf{v})_{i,r,s}$ is 
not equal to $0$ only if $r$ is grater than $l$ for 
all $i$, $r$, $s$. Otherwise we call  $\mathbf{v}$ \emph{type (iii)}.
Then 
$$(B_{P})_{j}(\mathbf{v}) = \begin{cases} \sum_{A \in \mathcal{A}_{j}, l_{P}(A) = \mathbf{v}}
\varepsilon(A) 
\ ( \ if \ \mathbf{v} \ is \ type(i) \ ),
 \\ \sum_{A \in \mathcal{A}_{j}, l_{P}(A)=\mathbf{v}}\varepsilon(A) \ (\ mod \ 2 \ ) 
\ ( \ if \ \mathbf{v} \ is \ type(ii) \ ), \\
0 \ ( \ otherwise \ ).
\end{cases}$$     
Then we define $S_{o}$ as follows: 
$$S_{o}(P) = ((B_{P})_{1}, (B_{P})_{2},\cdots,(B_{P})_{k}).$$ 
\begin{remark}
If we consider the case $\alpha$ is equal to the one-element set,
then $S_{o}$ is coincides to the Gibson's $S_{o}$ invariant in \cite{gi1}.
See \cite{gi1} for more details. 
\end{remark}
\begin{remark}
After submitting previous version of this paper to arXiv, 
Andrew Gibson gave me
many useful comments on the invariant $S_{o}$
which was defined in the previous version of this paper.
According to his advice, the author modified the definition 
of the extended $S_{o}$ invariant. 
Then the extended $S_{o}$ invariant in this version became stronger, 
and this invariant $S_{o}$ became equivalent to the invariant $U$ 
which is defined independently in \cite{gi2} by Gibson.    
\end{remark}
\begin{proposition}\label{propSo}
The $S_{o}$ is a homotopy invariant for nanophrases over $\alpha$.
\end{proposition}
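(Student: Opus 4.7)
The plan is to verify that $S_o(P)$ is preserved by isomorphism and each of the three homotopy moves, with the understanding that ``homotopy'' in the paper means $S$-homotopy with $S$ equal to the diagonal set of $\alpha^{3}$ (so in the third move $|A|=|B|=|C|$). Invariance under isomorphism is immediate: isomorphisms respect projections, relative positions of the first and second occurrences of each letter, and the component assignments, so they preserve each ingredient $\varepsilon$, $\sigma_{P}^{j}$, $l_{P}$, and $(B_{P})_{j}$ of the invariant.

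For the first homotopy move $xAAy \to xy$, the two occurrences of the deleted letter $A$ are adjacent, so $A$ cannot interlink with any other letter in either of the patterns $\cdots A \cdots Y \cdots A \cdots Y \cdots$ or $\cdots Y \cdots A \cdots Y \cdots A \cdots$. Hence $l_{P}(A) = \mathbf{0}$, which is excluded from every $(B_{P})_{j}$ by definition, while for every other letter $X$ the interlinking relations with letters other than $A$ are unaffected by the removal of the adjacent block $AA$. So $S_{o}$ is preserved.

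For the second homotopy move $xAByBAz \to xyz$ with $|B| = \tau(|A|)$, the letters $A$ and $B$ form the nested pattern $ABBA$, so $\sigma_{P}^{j}(A,B) = \sigma_{P}^{j}(B,A) = \mathbf{0}$. The remaining task is to partition every other letter $X$ by the positions of its two occurrences (both in $x$, both in $y$, both in $z$, one in $x$ and one in $y$, one in $y$ and one in $z$, or one in $x$ and one in $z$), and to observe that $X$ interlinks with $A$ if and only if it interlinks with $B$. The key algebraic observation is that because $|A|$ and $|B|$ lie in the common $\tau$-orbit $\widehat{a_{i_{q}}}$, with one representative and one non-representative, the sign conventions in the definition of $\sigma$ force the contributions $\sigma_{P}^{j}(X,A)$ and $\sigma_{P}^{j}(X,B)$ (respectively $\sigma_{P}^{j}(A,X)$ and $\sigma_{P}^{j}(B,X)$) to cancel: in the size-two orbit case in $\mathbb{Z}$, and in the size-one orbit case in $\mathbb{Z}/2\mathbb{Z}$. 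Hence $l_{P}(X)$ is unchanged for every surviving $X$, and the relation $\varepsilon(A) + \varepsilon(B) = 0$ (with the mod-$2$ analog when the orbit has size one) together with $l_{P}(A) = l_{P}(B)$ ensures that the removal of $A$ and $B$ contributes nothing net to any $(B_{P})_{j}$.

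For the third homotopy move $xAByACzBCt \to xBAyCAzCBt$ with $|A|=|B|=|C|$, the change consists of three local adjacent swaps, so one verifies that (i) each of the three pairs in $\{A,B,C\}$ remains interlinked after the move, the components of second occurrences are unchanged, and the projections (hence the signs in $\sigma$) are preserved because $|A|=|B|=|C|$; and (ii) for every other letter $X$, a case analysis on $X$'s two occurrences relative to the six positions $A_{1}, B_{1}, A_{2}, C_{1}, B_{2}, C_{2}$ shows that each interlinking of $X$ with $A$, $B$, or $C$ is either unchanged or the total change cancels in pairs (again using the common projection hypothesis). Consequently $l_{P}(Y)$ is unchanged for every $Y$, and so is $S_{o}$. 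The main obstacle I anticipate is the bookkeeping in the second move, where one must carefully chase the multi-case sign definition of $\sigma_{P}^{j}$ to exhibit the precise cancellations stemming from $|B|=\tau(|A|)$; the third move is more tedious in the number of cases but essentially mechanical once the equality of projections is used.
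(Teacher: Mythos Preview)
Your treatment of isomorphism and of the first and second homotopy moves is essentially the paper's argument and is fine. The problem is your claim~(i) for the third move: it is simply false that ``each of the three pairs in $\{A,B,C\}$ remains interlinked after the move.'' In $P_{1}=xAByACzBCt$ the pair $(A,B)$ is interlinked (pattern $ABAB$), the pair $(A,C)$ is \emph{not} interlinked (pattern $AACC$), and the pair $(B,C)$ is interlinked (pattern $BCBC$). After the move, in $P_{2}=xBAyCAzCBt$, the patterns become $BAAB$, $ACAC$, and $BCCB$ respectively, so the interlinking status of every one of the three pairs flips. Hence $l_{P_{1}}(A)$, $l_{P_{1}}(B)$, $l_{P_{1}}(C)$ are \emph{not} preserved term-by-term as you suggest, and your mechanism ``projections are preserved, so signs are preserved'' does not by itself close the argument.

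The paper's proof supplies exactly the missing cancellations. For $A$: the contribution $\sigma_{P_{1}}^{j}(A,B)$ (from the $ABAB$ pattern in $P_{1}$) equals $\sigma_{P_{2}}^{j}(A,C)$ (from the $ACAC$ pattern in $P_{2}$), because $|B|=|C|$ and the relevant component index of the second occurrence is the same; this gives $l_{P_{1}}(A)=l_{P_{2}}(A)$. Symmetrically for $C$. For $B$: in $P_{1}$ the letter $B$ interlinks with both $A$ (pattern $BABA$) and $C$ (pattern $BCBC$), while in $P_{2}$ it interlinks with neither; the paper checks that $\sigma_{P_{1}}^{j}(B,A)+\sigma_{P_{1}}^{j}(B,C)=0$ (in $\mathbb{Z}$ when the common orbit is free, in $\mathbb{Z}/2\mathbb{Z}$ when it is a fixed point), so $l_{P_{1}}(B)=l_{P_{2}}(B)$. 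Your outline needs to be repaired along these lines; once you replace the false ``interlinkings are preserved'' step with these three explicit comparisons, the rest of your plan goes through.
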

\begin{proof}
It is sufficient to show homotopy invariance of $(B_{P})_{j}$.\par
It is clear that $S_{o}$ does not change under isomorphisms.\par
Consider the first homotopy move 
$$P_{1}:=(\mathcal{A}, (xAAy)) \longrightarrow P_{2}:=(\mathcal{A} \setminus
\{A\}, (xy))$$
where $x$ and $y$ are words on $\mathcal{A}$, 
possibly including "$|$" character.\par
For each $\mathcal{A}$ and $j \in \hat{k}$, 
$\sigma_{P_{1}}^{j}(A,X)$ is equal to $0$ and 
$\sigma_{P_{1}}^{j}(X,A)$ is equal to $0$ for all $X \in \mathcal{A}$. 
Thus $A$ does not contribute to $l_{P}(X)$ and $l_{P}(A)$ is equal to $0$.
By the definition of $(B_{P_{1}})_{j}$, 
$A$ does not contribute to $(B_{P_{1}})_{j}$.
Therefore $(B_{P})_{j}$ is an invariant under the first homotopy move.\par
Consider the second homotopy move
$$P_{1}:=(\mathcal{A}, (xAByBAz)) \longrightarrow 
P_{2} := (\mathcal{A} \setminus \{A,B\}, (xyz))$$
where $|A|$ is equal to $\tau(|B|)$, 
and $x$, $y$ and $z$ are words on $\mathcal{A}$
possibly including "$|$" character.\par
We show $l_{P_{1}}(D)$ is equal to $l_{P_{2}}(D)$ for all $D$ $\in$ 
$\mathcal{A}$ $\setminus$ $\{A,B\}$.
In fact, 
\begin{eqnarray*}
l_{P_{1}}(D) &=& \sum_{X \in \mathcal{A}}\sigma_{P_{1}}^{j}(D,X) \\
            &=& \sum_{X \in \mathcal{A} \setminus \{A,B\}}\sigma_{P_{1}}^{j}(D,X)
                + \sigma_{P_{1}}^{j}(D,A) + \sigma_{P_{1}}^{j}(D,B) 
                 \ \  \cdots\cdots (\ast)
\end{eqnarray*}
If $\widehat{|D|}$ is a fixed point of $\tau$, then all non-zero entries of 
$\sigma_{P_{1}}^{j}(D,A)$ and $\sigma_{P_{1}}^{j}(D,B)$ are elements
of $\mathbb{Z}/2\mathbb{Z}$ and 
$\sigma_{P_{1}}^{j}(D,A)$ is equal to $\sigma_{P_{1}}^{j}(D,B)$. 
Thus $\sigma_{P_{1}}^{j}(D,A)+\sigma_{P_{1}}^{j}(D,B)=0$.\par
If $\widehat{|D|}$ is a not fixed point of $\tau$ 
(in other words, $\widehat{|D|}$ is a  
\emph{free orbit} of $\tau$) and $\widehat{|A|}(=\widehat{|B|})$ is a fixed 
point of $\tau$, then we obtain 
$\sigma_{P_{1}}^{j}(D,A)+\sigma_{P_{1}}^{j}(D,B)=0$
similarly as the above case.\par
If $\widehat{|D|}$ and $\widehat{|A|}$ are a free orbits of $\tau$,
then $\sigma_{P_{1}}^{j}(D,A)$ is equal to $-\sigma_{P_{1}}^{j}(D,B)$.
Thus we obtain 
$\sigma_{P_{1}}^{j}(D,A)+\sigma_{P_{1}}^{j}(D,B)=0$.
Therefore  
\begin{eqnarray*}
(\ast) &=& \sum_{X \in \mathcal{A} \setminus \{A,B\}}\sigma_{P_{1}}^{j}(D,X)\\
       &=& \sum_{X \in \mathcal{A} \setminus \{A,B\}}\sigma_{P_{2}}^{j}(D,X)\\
       &=& l_{P_{2},j}(D).
\end{eqnarray*}
If $A$ and $B$ are not in $\mathcal{A}_{j}$ for all $j \in \hat{k}$, 
then we completed the proof of the proposition.\par
Now we assume $A \in \mathcal{A}_{j}$ for some $j \in \hat{k}$.
Then $B \in \mathcal{A}_{j}$ since $\widehat{|A|}$ is equal to
$\widehat{|B|}$. Moreover by arrangement of letters $A$ and $B$, 
we obtain $l_{P_{1}}(A)$ is equal to $l_{P_{2}}(A)$.
Thus 
$$(B_{P_{1}})_{j}(l_{P_{1}}(A)) = 
\sum_{X \in \mathcal{A}_{j} \setminus \{A,B\},l_{P_{1}}(X)=l_{P_{1}}(A)}
\varepsilon(X)+\varepsilon(A)+\varepsilon(B)$$
If $\widehat{|A|}$ is a free orbit of $\tau$, then $\varepsilon(A)$ is equal to
$-\varepsilon(B)$ since $|A|$ is equal to $\tau(|B|)$.
Thus $\varepsilon(A) + \varepsilon(B) = 0$.
If $\widehat{|A|}$ is a fixed point of $\tau$, then $l_{P_{1}}(A)$ is 
type (ii) and 
$\varepsilon(A) + \varepsilon(B) = 2 = 0$ (mod $2$).  
Thus contributions of $A$ and $B$ to $(B_{P_{1}})_{j}$ is vanish.
It is clear that $(B_{P_{1}})_{j}(\mathbf{v})$ is equal to 
$(B_{P_{2}})_{j}(\mathbf{v})$ if $\mathbf{v}$ is not equal to $l_{P_{1}}(A)$.
Therefore $(B_{P})_{j}$ is invariant under the second homotopy move.\par 
Consider the third homotopy move
$$P_{1}:= (\mathcal{A},(xAByACzBCt)) \rightarrow 
P_{2}:=(\mathcal{A},(xBAyCAzCBt))$$
where $|A|=|B|=|C|$, and $x$, $y$, $z$ and $t$ are words on 
$\mathcal{A}$ possibly including "$|$" character.\par
We call a letter $A$ \emph{single component letter} if 
$A$ appears twice in the same component of the phrase.
First we consider the case $A$, $B$ and $C$ are single component letters.
We show $l_{P_{1}}(X)$ is equal to $l_{P_{2}}(X)$ for all $X \in \mathcal{A}$.
For all $D \in \mathcal{A}$ $\setminus$ $\{A,B\}$,
it is clear that  $l_{P_{1}}(D)$ is equal to $l_{P_{2}}(D)$.\par
On $l_{P_{1}}(A)$ is equal to $l_{P_{2}}(A)$. Note that letters 
which contribute to $l_{P_{1}}(A)$ (respectively $l_{P_{2}}(A)$)
are $B$ and letters in $y$ (respectively  $C$ and letters in $y$).
It is easily checked that $\sigma_{P_{1}}^{j}(A,X)$ is equal to 
 $\sigma_{P_{2}}^{j}(A,X)$ for all letter $X$ in $y$.
Moreover since $\cdots$ $A$ $\cdots B$ $\cdots A$ $\cdots B$ 
$\cdots$ in $P_{1}$,
$\cdots$ $A$ $\cdots C$ $\cdots A$ $\cdots C$ 
$\cdots$ in $P_{2}$, 
$|B|$ is equal to $|C|$ 
and $B$ and $C$ that appear second times
are belong to the same component, we obtain
$l_{P_{1}}(A)$ is equal to $l_{P_{2}}(A)$.\par
On $l_{P_{1}}(B)$ is equal to $l_{P_{2}}(B)$.
Note that letters 
which contribute to $l_{P_{1}}(B)$ (respectively $l_{P_{2}}(B)$)
are letters in $y$, $A$, $C$ and letters in $z$ 
(respectively letters in $y$ and letters in $z$).
It is clear that  $\sigma_{P_{1}}^{j}(B,X)$ is equal to 
 $\sigma_{P_{2}}^{j}(B,X)$ for all letter $X$ in $y$ or $z$.
We show $\sigma_{P_{1}}^{j}(B,A) + \sigma_{P_{1}}^{j}(B,C)$ $=$ $0$.
Consider the case $\widehat{|A|}$ ($=\widehat{|B|}=\widehat{|C|}$) 
is a free orbit of $\tau$. Note 
that letters $A$, $B$ and $C$ are contained in the same component 
of the phrase (we suppose this component is the $j$-th component).
If we assume $\sigma_{P_{1}}^{j}(B,A)$ is equal to $\pm \mathbf{e}_{(j,j)}$,
then $\sigma_{P_{1}}^{j}(B,C)$ is equal to $\mp \mathbf{e}_{(j,j)}$ 
where the double signs are taken in the same order. Thus we obtain
$\sigma_{P_{1}}^{j}(B,A) + \sigma_{P_{1}}^{j}(B,C)$ $=$ $0$. 
Next consider the case $\widehat{|A|}$ ($=\widehat{|B|}=\widehat{|C|}$) 
is a fixed point of $\tau$.
Then $\sigma_{P_{1}}^{j}(B,A)$ is equal to $\mathbf{e}_{(j,j)}$ and
$\sigma_{P_{1}}^{j}(B,C)$ is equal to $\mathbf{e}_{(j,j)}$.
Since all the non-zero entries of $\sigma_{P_{1}}^{j}(B,A)$ and 
$\sigma_{P_{1}}^{j}(B,C)$ are the elements of $\mathbb{Z} / 2\mathbb{Z}$,
we obtain $\sigma_{P_{1}}^{j}(B,A) + \sigma_{P_{1}}^{j}(B,C)$ $=$ $0$.
Therefore 
\begin{eqnarray*}
l_{P_{1},j}(B) &=& \sum_{X \in \mathcal{A}}\sigma_{P_{1}}^{j}(B,X)\\
              &=& \sum_{X \in \mathcal{A} \setminus \{A,C\}}\sigma_{P_{1}}^{j}(B,X)+
                   \sigma_{P_{1}}^{j}(B,A) + \sigma_{P_{1}}^{j}(B,C)\\
              &=& \sum_{X \in \mathcal{A} \setminus \{A,C\}}
                                           \sigma_{P_{1}}^{j}(B,X)\\
              &=& \sum_{X \in \mathcal{A} \setminus \{A,C\}}
                                            \sigma_{P_{2}}^{j}(B,X)\\
              &=& l_{P_{2},j}(B)
\end{eqnarray*}
for each $j \in \hat{k}$.
Thus we obtain $l_{P_{1}}(B)$ is equal to $l_{P_{2}}(B)$.\par
On $l_{P_{1}}(C)$ is equal to $l_{P_{2}}(C)$.   
Note that letters 
which contribute to $l_{P_{1}}(C)$ (respectively $l_{P_{2}}(C)$)
are letters in $z$ and $B$ 
(respectively letters in $z$ and $A$). 
It is clear that $\sigma_{P_{1}}^{j}(C,X)$ is equal to 
$\sigma_{P_{2}}^{j}(C,X)$ for all letter $X$ in $z$.
Moreover since $\cdots$ $B$ $\cdots C$ $\cdots B$ $\cdots C$ 
$\cdots$ in $P_{1}$,
$\cdots$ $A$ $\cdots C$ $\cdots A$ $\cdots C$ 
$\cdots$ in $P_{2}$, 
$|A|$ is equal to $|C|$ 
and $A$ and $C$ that appear first times
are belong to the same component, we obtain
$l_{P_{1}}(C)$ is equal to $l_{P_{2}}(C)$.\par
Therefore if $A$, $B$ and $C$ are single component letters,
$(B_{P})_{j}$ is the invariant of the third homotopy move.\par
By using a part of above discussion,  
we can prove the case $A$, $B$ and $C$ are not single component letters.
Now we have completed the proof of the proposition. 
\end{proof}
In \cite{gi1}, Gibson showed that Gibson's $S_{o}$ invariant is 
strictly stronger than the invariant $T$ for Gauss phrases.
We have a similar statement for extended $S_{o}$ invariant for nanophrases
over any alphabet.
\begin{proposition}
The invariant $S_{o}$ for nanophrases over $\alpha$ is strictly
stronger than the invariant $T$ for nanophrases over $\alpha$.
\end{proposition}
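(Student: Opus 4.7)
The plan is to establish two complementary facts: (a) $T$ factors through $S_{o}$, so that $S_{o}(P_{1}) = S_{o}(P_{2})$ implies $T(P_{1}) = T(P_{2})$; and (b) there exist nanophrases over $\alpha$ with the same $T$ but different $S_{o}$. Together these yield the claim that $S_{o}$ is strictly stronger than $T$.

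For (a), I would exhibit $T$ as an aggregation of the finer data stored in $S_{o}$. The invariant $T$ records, for each component index $j$ and each $\tau$-orbit of $\alpha$, a signed count of the letters in $\mathcal{A}_{j}$ linked with the remaining components. The refined maps $(B_{P})_{j}$ split exactly this count according to the full linking vector $l_{P}(A)\in(\prod K_{(p,q)})^{k}$. Summing $(B_{P})_{j}(\mathbf{v})$ over all $\mathbf{v}$ lying in a suitably defined coordinate slice (namely, the slice that forgets everything except the orbit of $|A|$ and the component $j$) should recover the $j$-th entry of $T$. A careful bookkeeping of types (i), (ii), (iii) is required because $K_{(p,q)}$ equals $\mathbb{Z}$ when both indices are $\le l$ and $\mathbb{Z}/2\mathbb{Z}$ otherwise; the aggregation must respect the correct coefficient ring in each case, and reduction modulo $2$ may be needed on the $\mathbb{Z}$-coefficients to land in the codomain of $T$.

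For (b), I would invoke Gibson's explicit pair of Gauss phrases from \cite{gi1}, call them $G_{1}$ and $G_{2}$, satisfying $T(G_{1})=T(G_{2})$ but $S_{o}(G_{1})\ne S_{o}(G_{2})$. Given any alphabet $\alpha$ with involution $\tau$, choose an element $a_{0}\in\alpha$ and define nanophrases $\tilde{P}_{i}$ over $\alpha$ by keeping the underlying combinatorial Gauss phrase $G_{i}$ and decorating each letter with the constant projection $|A|=a_{0}$. The subcomplex of letters projecting to $a_{0}$ inherits either the trivial involution (if $a_{0}$ is a fixed point of $\tau$) or sits inside the free orbit $\{a_{0},\tau(a_{0})\}$; in either case, the definitions of $T$ and $S_{o}$ restricted to such letters reduce to Gibson's originals, up to an inclusion of coefficient rings $\mathbb{Z}/2\mathbb{Z}\hookrightarrow\mathbb{Z}$ in the free-orbit case that can only refine, not collapse, the distinction. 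Hence $T(\tilde{P}_{1})=T(\tilde{P}_{2})$ while $S_{o}(\tilde{P}_{1})\ne S_{o}(\tilde{P}_{2})$.

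The main obstacle will be step (a): writing out the aggregation map from $S_{o}$-values to $T$-values and verifying it uniformly across the mixed-coefficient setting (free orbits contributing to $\mathbb{Z}$-factors, fixed points contributing to $\mathbb{Z}/2\mathbb{Z}$-factors). Step (b) is essentially a transport of Gibson's example and reduces to checking that the constant-projection embedding intertwines the two pairs of invariants, which follows directly from the definitions of $\sigma_{P}^{j}$ and $\varepsilon$ given in the preceding subsection.
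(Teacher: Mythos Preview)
Your overall strategy matches the paper's: for (a) the paper also recovers $T_{P}(w_{j})$ from $(B_{P})_{j}$ by an aggregation (it forms the set $\{(B_{P})_{j}(\mathbf{v})\mathbf{v}\}$, sums its elements, then sums all entries of the resulting vector), and for (b) the paper likewise defers to Gibson's example in \cite{gi1}. So the plan is sound and essentially the same route.

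There is, however, a genuine slip in your part (b). You invoke an ``inclusion of coefficient rings $\mathbb{Z}/2\mathbb{Z}\hookrightarrow\mathbb{Z}$'' in the free-orbit case, but no such inclusion exists (neither as rings nor as abelian groups). The natural map goes the other way, $\mathbb{Z}\twoheadrightarrow\mathbb{Z}/2\mathbb{Z}$, and the argument you need is the opposite of what you wrote: one should check that reducing the $\mathbb{Z}$-valued extended invariants modulo $2$ recovers Gibson's $\mathbb{Z}/2\mathbb{Z}$-valued invariants, so that a distinction visible after reduction forces a distinction before reduction. Even this requires care, because $l_{P}(A)$ serves as the \emph{argument} of $(B_{P})_{j}$, not just its value: two letters with distinct $\mathbb{Z}$-valued linking vectors may have equal reductions mod $2$, so the map $(B_{P})_{j}$ does not simply reduce entrywise. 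The clean fix is to avoid this issue entirely by choosing $a_{0}$ to be a fixed point of $\tau$ whenever one exists, in which case the embedded invariants coincide on the nose with Gibson's; if $\tau$ is fixed-point-free you must actually verify that Gibson's specific pair remains distinguishing under the $\mathbb{Z}$-valued $S_{o}$ (and still undistinguished by the $\mathbb{Z}$-valued $T$), which is a separate small computation rather than a formal consequence. The paper itself is terse on this point, simply citing \cite{gi1} Example 6.4, so your instinct to be explicit about the transport is good---just get the direction of the coefficient map right.
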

\begin{proof}
It is sufficient to show that we can recover $T_{P}(w_{j})$ from
$(B_{P})_{j}$ (we use the same symbols as \cite{fu2}). 
First we define a set $B_{j}(P)$ as follows.  
\begin{eqnarray*}
B_{j}(P) &:=& \{ (B_{P})_{j}(\mathbf{v})\mathbf{v} | 
\exists A  \in \mathcal{A}_{j}   
                                   \ such \ that \ l_{P}(A) = \mathbf{v} \} \\
      &=& \{  (B_{P})_{j}(l_{P}(A))l_{P}(A) | A \in \mathcal{A}_{j} \}. \end{eqnarray*}
We split $\mathcal{A}_{j}$ into a disjoint sum of sets as follows.
$$\{A \in \mathcal{A}_{j} | l_{P}(A) = \mathbf{v}_{1} \} 
(=: \{A_{1}^{1},\cdots,A_{i_{1}}^{1}\}),$$ 
$$\{A \in \mathcal{A}_{j}| l_{P}(A) = \mathbf{v}_{2} \} 
(=: \{A_{1}^{2},\cdots,A_{i_{2}}^{2}\}),$$
$$\vdots$$
$$\{A \in \mathcal{A}_{j}| l_{P}(A) = \mathbf{v}_{m} \} 
(=: \{A_{1}^{m},\cdots,A_{i_{m}}^{m}\}).$$
Then
\begin{eqnarray*}
B_{j}(P) &=& \{ \sum_{j=1}^{i_{1}}\varepsilon(A_{j})\mathbf{v_{1}},\cdots,  
                        \sum_{j=1}^{i_{m}}\varepsilon(A_{j})\mathbf{v_{m}} 
                                                                     \} \\
      &=& \{ \sum_{j=1}^{i_{1}}\varepsilon(A_{j})l_{P}(A_{j}^{1}),\cdots,  
                       \sum_{j=1}^{i_{m}}\varepsilon(A_{j})l_{P}(A_{j}^{m})
                                                                    \}.   
\end{eqnarray*}
We sum up all the elements of $B_{j}(P)$, we obtain 
\begin{eqnarray*}
& & \sum_{A \in \mathcal{A}_{j}}\varepsilon(A)l_{P}(A)\\
&=& \left( \sum_{A \in \mathcal{A}_{j}}\varepsilon(A)
                      \sum_{X \in \mathcal{A}}\sigma_{P}^{1}(A,X),\cdots,
        \sum_{A \in \mathcal{A}_{j}}\varepsilon(A)
                      \sum_{X \in \mathcal{A}}\sigma_{P}^{k}(A,X) \right).
\end{eqnarray*}
Moreover we sum up all the entries of the above vector, we obtain 
\begin{eqnarray*}
\sum_{j=1}^{k} \sum_{A \in \mathcal{A}_{j}} \varepsilon(A) \sum_{X \in \mathcal{A}}
\sigma_{P}^{j}(A,X)
&=& 
\sum_{A \in \mathcal{A}_{j}} \varepsilon(A) \sum_{j=1}^{k} 
\sum_{X \in \mathcal{A}} \sigma_{P}^{j}(A,X) \\
&=& 
\sum_{A \in \mathcal{A}_{j}} \varepsilon(A) \sum_{X \in \mathcal{A}}
\sigma_{P}(A,X) \\
&=& 
T_{P}(w_{j}).
\end{eqnarray*}
Thus we can recover $T_{P}(w_{j})$ from $B_{j}(P)$. 
Therefore $S_{o}$ is strictly stronger than $T$
(the proof of "strictly" is shown in \cite{gi1} Example 6.4). 
\end{proof} 
In the next sub-subsection we extends the invariant $S_{o}$ for nanophrases 
over $\alpha$ to a $(Q_{\alpha,k},R_{\alpha,k},S_{\alpha,k})$-homotopy 
invariant  $\widetilde{S_{o}}$ for nanowords over $\alpha_{k}$.
Moreover we show $\widetilde{S_{o}}$ is an $S_{\alpha,k}$-homotopy invariant
for nanowords over $\alpha_{k}$. 
\subsubsection{The invariant $\widetilde{S_{o}}$.}
In this sub-subsection we extend the $S_{o}$ invariant which was defined in 
the previous sub-subsection 
to the invariant $\widetilde{S_{o}}$
for nanophrases over $\alpha_{k}$.
To define $\widetilde{S_{o}}$, we prepare some notations. 
For a nanoword $w$ over $\alpha_{k}$ and letters $A, B \in \mathcal{A}$,
we define $\widetilde{\sigma}_{w}^{j}(A,B)$ as follows:
If $A$ and $B$ form
$\cdots A \cdots B \cdots A \cdots B \cdots$ in $w$,   
$|A| \in \widehat{a_{p}}$ and $\|B\|=(a_{q})_{ij}$ 
for some $p, q \in \{1, \cdots l+m \}$ and $i \le j$, or 
$\cdots B \cdots A \cdots B \cdots A \cdots$ in $w$ , 
$|A| \in \widehat{a_{p}}$ and
$\|B\|=\tau(a_{q})_{jl}$ for some $p,q \in \{1, \cdots l+m\}$ and $l \ge j$, 
then 
$\widetilde{\sigma}_{w}^{j}(A,B):= \mathbf{e}_{(p,q)}$.
If $\cdots A \cdots B \cdots A \cdots B \cdots$ in $w$, 
$|A| \in \widehat{a_{p}}$ and  $\|B\|=\tau(a_{q})_{ij}$, 
for some $p,q \in \{1,\cdots,l+m\}$ and $i \le j$, 
or $\cdots B \cdots A \cdots B \cdots A \cdots$ in $w$, 
$|A| \in \widehat{a_{p}}$ and $\|B\|=(a_{i_q})_{jl}$
for some $p,q \in \{1,\cdots,l+m\}$ and $j \le l$, 
then 
$\widetilde{\sigma}_{P}^{j}(A,B):=-\mathbf{e}_{(p,q)}$. 
Otherwise $\widetilde{\sigma}_{P}^{j}(A,B):=\mathbf{0}$. 
Moreover we define notations $\widetilde{l}_{w,j}$, $\widetilde{l}_{w}$ 
and $(\widetilde{B}_{w})_{j}$ as follows:
$$\widetilde{l}_{w,j}(A) = \sum_{X \in \mathcal{A}} 
\widetilde{\sigma}_{w}^{j}(A,X) \in \prod K_{(p,q)},$$
$$\widetilde{l}_{w}(A) = (\widetilde{l}_{w,1}(A),\widetilde{l}_{w,2}(A), 
\cdots, \widetilde{l}_{w,k}(A)) 
\in (\prod K_{(p,q)})^{k}.$$
Then $(\widetilde{B}_{w})_{j}$ is a map from $ (\prod K_{(p,q)})^{k}$ 
$\setminus$ $\{\mathbf{0}\}$ to $\mathbb{Z}$ or $\mathbb{Z}/2\mathbb{Z}$ which is 
defined by 
$$(\widetilde{B}_{w})_{j}(\mathbf{v}) = \begin{cases} \sum_{A \in \mathcal{A}_{jj}, \widetilde{l}_{w}(A) = \mathbf{v}}
\varepsilon(A) 
\ ( \ if \ \mathbf{v} \ is \ type(i) \ ),
 \\ \sum_{A \in \mathcal{A}_{jj}, \widetilde{l}_{w}(A)=\mathbf{v}}\varepsilon(A) \ (\ mod \ 2 \ ) 
\ ( \ if \ \mathbf{v} \ is \ type(ii) \ ), \\
0 \ ( \ otherwise \ ).
\end{cases}$$  
Then we define $\widetilde{S}_{o}$ as follows: 
$$\widetilde{S}_{o}(w) = ((\widetilde{B}_{w})_{1}, (\widetilde{B}_{w})_{2},
\cdots,(\widetilde{B}_{w})_{k}).$$ 
Then we obtain the following proposition.
\begin{proposition}
The $\widetilde{S}_{o}$ is a $(Q_{\alpha,k},R_{\alpha,k},
(\Delta_{\alpha^{3}})_{\alpha,k})$-homotopy invariant of nanowords over 
$\alpha_{k}$.
\end{proposition}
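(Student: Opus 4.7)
The plan is to adapt the proof of Proposition \ref{propSo} essentially verbatim. Since $\widetilde{S}_o$ is defined on nanowords over $\alpha_k$ by the same bookkeeping as $S_o$, with the pair of component indices now encoded in the projection $\|\cdot\|$ instead of in the division of a phrase into words, each of the three $(Q_{\alpha,k},R_{\alpha,k},(\Delta_{\alpha^3})_{\alpha,k})$-homotopy moves should produce exactly the cancellations that occurred in the corresponding $S_o$-homotopy moves. A direct check is needed because $\widetilde{S}_o$ is defined on all of $\mathcal{N}(\alpha_k)$, not only on the image of the injection $\varphi$ of Theorem \ref{thm3}, so we cannot simply transport invariance through $\varphi$. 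Invariance under isomorphism is immediate, and it suffices to show that each $(\widetilde{B}_w)_j$ is unaffected by the three homotopy moves.

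For the first move $xAAy \longrightarrow xy$, the hypothesis $\|A\| \in Q_{\alpha,k}$ forces $\|A\|=c_{ii}$ for some $c\in\alpha$ and $i\in\hat{k}$. Reading the definition of $\widetilde{\sigma}_w^j$ one sees that both $\widetilde{\sigma}_w^j(A,X)$ and $\widetilde{\sigma}_w^j(X,A)$ vanish for every other letter $X$ and every $j$, since the relevant index comparisons involve $\|A\|$ with repeated subscripts, so $A$ contributes trivially to $\widetilde{l}_w$ and therefore to $(\widetilde{B}_w)_j$. For the second move $xAByBAz \longrightarrow xyz$, the hypothesis $(\|A\|,\|B\|)\in R_{\alpha,k}$ gives $\|A\|=c_{ij}$ and $\|B\|=\tau(c)_{ij}$, whence $|A|=\tau(|B|)$ and the pair $(A,B)$ shares a common index pair. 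The cancellation argument in the proof of Proposition \ref{propSo} depends only on whether $\widehat{|A|}$ is a free orbit or a fixed point of $\tau$ and on the arrangement $\cdots A \cdots B \cdots B \cdots A \cdots$; it carries over literally to show $\widetilde{\sigma}_w^j(D,A)+\widetilde{\sigma}_w^j(D,B)=0$ for $D\notin\{A,B\}$, and $\varepsilon(A)+\varepsilon(B)$ vanishes either as $1+(-1)$ (free orbit) or as $2\equiv 0 \pmod 2$ (fixed point, where the entry lies in $\mathbb{Z}/2\mathbb{Z}$).

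The main obstacle is the third move $xAByACzBCt \longrightarrow xBAyCAzCBt$ restricted to $(\|A\|,\|B\|,\|C\|)\in (\Delta_{\alpha^3})_{\alpha,k}$. By the definitions of the diagonal and of $(\cdot)_{\alpha,k}$, this condition is precisely $|A|=|B|=|C|=c$ for some $c\in\alpha$, together with $\|A\|=c_{ij}$, $\|B\|=c_{il}$, $\|C\|=c_{jl}$ for some $1\le i\le j\le l\le k$ — exactly the indexing pattern one sees in the image of $\varphi$. Because $\widehat{|A|}=\widehat{|B|}=\widehat{|C|}$, the three-letter cancellation arguments of Proposition \ref{propSo} — verifying $\widetilde{l}_{w_1}(A)=\widetilde{l}_{w_2}(A)$, $\widetilde{l}_{w_1}(B)=\widetilde{l}_{w_2}(B)$ and $\widetilde{l}_{w_1}(C)=\widetilde{l}_{w_2}(C)$ via pairwise identities such as $\widetilde{\sigma}^j(B,A)+\widetilde{\sigma}^j(B,C)=0$ — apply with the same case split on whether $\widehat{c}$ is a free orbit or a fixed point of $\tau$. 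The extra subtlety here, which will require care in the write-up, is that one must confirm that the component index $j$ at which the contribution is recorded is preserved by the move for each of the subcases $i<j<l$, $i=j<l$, $i<j=l$, and $i=j=l$; however, since the move leaves the multiset of letters and their projections $\|\cdot\|$ unchanged, the index pairs $(i,j),(i,l),(j,l)$ attached to $A,B,C$ respectively are unchanged as well, so $\varepsilon$ and the target factor $K_{(p,q)}$ are both preserved. For letters $D\notin\{A,B,C\}$ the equality $\widetilde{l}_{w_1}(D)=\widetilde{l}_{w_2}(D)$ is immediate from the definitions because the relative order of $D$ with each of $A,B,C$ is unchanged modulo the swaps within $\{A,B\}$, $\{A,C\}$, $\{B,C\}$ that cancel by the same mechanism as in the second-move analysis. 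Assembling these pieces gives $(\widetilde{B}_{w_1})_j=(\widetilde{B}_{w_2})_j$ for every $j$, completing the argument.
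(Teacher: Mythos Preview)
Your approach is essentially the one taken in the paper: adapt the proof of Proposition~\ref{propSo} move by move, with the component index now read off the subscripts of $\|\cdot\|$ rather than from the position of a word in a phrase. The structure of the second- and third-move arguments matches the paper's, including the free-orbit versus fixed-point case split and the reduction of the non-single-component subcases of the third move to pieces of the single-component computation.

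There is, however, a misjustification in your treatment of the first move. You write that $\widetilde{\sigma}^{j}_{w}(A,X)$ and $\widetilde{\sigma}^{j}_{w}(X,A)$ vanish ``since the relevant index comparisons involve $\|A\|$ with repeated subscripts.'' That is not the reason: the subscript pair $(i,i)$ attached to $\|A\|$ plays no role here. What forces the vanishing is purely the arrangement $xAAy$ --- with the two occurrences of $A$ adjacent, no other letter $X$ can be interleaved with $A$, so the defining conditions for $\widetilde{\sigma}^{j}$ are never met. The paper makes exactly this observation and then notes explicitly that the hypothesis $\|A\|\in Q_{\alpha,k}$ was not used; this is what yields the subsequent corollary that $\widetilde{S}_{o}$ is in fact an $S_{\alpha,k}$-homotopy invariant. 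Your stated reason would obscure that stronger conclusion, so in the write-up you should replace the index-based explanation with the adjacency argument.
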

\begin{proof}
It is sufficient to show homotopy invariance of $(\widetilde{B}_{w})_{j}$.\par
It is clear that $\widetilde{S}_{o}$ does not change under isomorphisms.\par
Consider the first homotopy move 
$$w_{1}:=(\mathcal{A}, xAAy) \longrightarrow w_{2}:=(\mathcal{A} \setminus
\{A\}, xy)$$
where $\|A\| \in Q_{\alpha,k}$, $x$ and $y$ are words on $\mathcal{A}$.\par
For each $\mathcal{A}$ and $j \in \hat{k}$, 
$\widetilde{\sigma}_{w_{1}}^{j}(A,X)$ is equal to $0$ and 
$\widetilde{\sigma}_{w_{1}}^{j}(X,A)$ is equal to $0$ for all
$X \in \mathcal{A}$. 
Thus $A$ does not contribute to $\widetilde{l}_{w}(X)$ and 
$\widetilde{l}_{w}(A)$ is equal to $0$.
By the definition of $(\widetilde{B}_{w_{1}})_{j}$, $A$ does not contribute to 
$(\widetilde{B}_{w_{1}})_{j}$.
Therefore $(\widetilde{B}_{w_{1}})_{j}$ is an invariant 
under the first homotopy move.\par
Consider the second homotopy move
$$w_{1}:=(\mathcal{A}, xAByBAz) \longrightarrow 
w_{2} := (\mathcal{A} \setminus \{A,B\}, xyz)$$
where $\|A\|$ is equal to $\tau_{k}(\|B\|)$ 
(put $\|A\|=|A|_{ij}$ and $\|B\|=|B|_{ij}$), 
and $x$, $y$ and $z$ are words on $\mathcal{A}$ \par
We show $\widetilde{l}_{w_{1}}(D)$ is equal to 
$\widetilde{l}_{w_{2}}(D)$ for all $D$ $\in$ 
$\mathcal{A}$ $\setminus$ $\{A,B\}$.
In fact, 
\begin{eqnarray*}
\widetilde{l}_{w_{1}}(D) &=& \sum_{X \in \mathcal{A}}
                               \widetilde{\sigma}_{w_{1}}^{j}(D,X) \\
                        &=& \sum_{X \in \mathcal{A} \setminus \{A,B\}}
                               \widetilde{\sigma}_{w_{1}}^{j}(D,X)
                            + \widetilde{\sigma}_{w_{1}}^{j}(D,A) + 
                              \widetilde{\sigma}_{w_{1}}^{j}(D,B) 
                            \ \  \cdots\cdots (\ast)
\end{eqnarray*}
If $\widehat{|D|}$ is a fixed point of $\tau$, then all non-zero entry of 
$\widetilde{\sigma}_{w_{1}}^{j}(D,A)$ and 
$\widetilde{\sigma}_{w_{1}}^{j}(D,B)$ are elements
of $\mathbb{Z}/2\mathbb{Z}$ and 
$\widetilde{\sigma}_{w_{1}}^{j}(D,A)$ is equal to 
$\widetilde{\sigma}_{w_{1}}^{j}(D,B)$. 
Thus 
$\widetilde{\sigma}_{w_{1}}^{j}(D,A)+\widetilde{\sigma}_{w_{1}}^{j}(D,B)=0$.\par
If $\widehat{|D|}$ is a free orbit of $\tau$ 
and $\widehat{|A|}(=\widehat{|B|})$ is a fixed 
point of $\tau$, then we obtain 
$\widetilde{\sigma}_{w_{1}}^{j}(D,A)+\widetilde{\sigma}_{w_{1}}^{j}(D,B)=0$
similarly as the above case.\par
If $\widehat{|D|}$ and $\widehat{|A|}$ are a free orbits of $\tau$,
then $\widetilde{\sigma}_{w_{1}}^{j}(D,A)$ is equal to 
$-\widetilde{\sigma}_{w_{1}}^{j}(D,B)$.
Thus we obtain 
$\widetilde{\sigma}_{w_{1}}^{j}(D,A)+\widetilde{\sigma}_{w_{1}}^{j}(D,B)=0$.
Therefore
\begin{eqnarray*}
(\ast) &=& \sum_{X \in \mathcal{A} \setminus \{A,B\}}
                           \widetilde{\sigma}_{w_{1}}^{j}(D,X)\\
       &=& \sum_{X \in \mathcal{A} \setminus \{A,B\}}
                           \widetilde{\sigma}_{w_{2}}^{j}(D,X)\\
       &=& \widetilde{l}_{w_{2},j}(D).
\end{eqnarray*}
If $A$ and $B$ are not in $\mathcal{A}_{jj}$ for all $j \in \hat{k}$, 
then we completed the proof of the proposition.\par
Now we assume $A \in \mathcal{A}_{jj}$ for some $j \in \hat{k}$.
Then $B \in \mathcal{A}_{jj}$ since $\widehat{\|A\|}$ is equal to
$\widehat{\|B\|}$ (note that $\tau_{k}$ does not change the subscript
of the projection). 
Moreover by arrangement of letters $A$ and $B$, 
we obtain $\widetilde{l}_{w_{1}}(A)$ 
is equal to $\widetilde{l}_{w_{2}}(A)$.
Thus 
$$(\widetilde{B}_{w_{1}})_{j}(\widetilde{l}(A)) = 
\sum_{X \in \mathcal{A}_{jj} \setminus \{A,B\},\widetilde{l}_{w_{1}}(X)
                                                 =\widetilde{l}_{w_{1}}(A)}
\varepsilon(X)+\varepsilon(A)+\varepsilon(B).$$
If $\widehat{|A|}$ is a free orbit of $\tau$, then $\varepsilon(A)$ is equal to
$-\varepsilon(B)$ since $|A|$ is equal to $\tau(|B|)$.
Thus $\varepsilon(A) + \varepsilon(B) = 0$.\par
If $\widehat{|A|}$ is a fixed point of $\tau$, then
$\varepsilon(A) + \varepsilon(B) = 2$ $= 0$ (mod $2$).
Thus contributions of $A$ and $B$ to 
$(\widetilde{B}_{w_{1}})_{j}(\widetilde{l}_{w_{1}}(A))$ 
is vanish. It is clear that
$(\widetilde{B}_{w_{1}})_{j}(\mathbf{v})$ is equal to  
$(\widetilde{B}_{w_{2}})_{j}(\mathbf{v})$ if $\mathbf{v}$ is not equal to
$\widetilde{l}_{w_{1}}(A)$.      
Therefore $(\widetilde{B}_{w})_{j}$ 
is invariant under the second homotopy move.\par 
Consider the third homotopy move
$$w_{1}:= (\mathcal{A},xAByACzBCt) \rightarrow 
w_{2}:=(\mathcal{A},xBAyCAzCBt)$$
where $\|A\|=\|B\|=\|C\|$, and $x$, $y$, $z$ and $t$ are words on 
$\mathcal{A}$ \par
Now we use same terminologies as in the proof of Proposition \ref{propSo}. 
We call a letter $A$ \emph{single component letter} if 
$A \in \mathcal{A}_{jj}$.
First we consider the case $A$, $B$ and $C$ are single component letters.
We show $\widetilde{l}_{w_{1}}(X)$ is equal to 
$\widetilde{l}_{w_{2}}(X)$ for all $X \in \mathcal{A}$.
For all $D \in \mathcal{A}$ $\setminus$ $\{A,B\}$,
it is clear that  $\widetilde{l}_{w_{1}}(D)$ 
is equal to $\widetilde{l}_{w_{2}}(D)$.\par
On $\widetilde{l}_{w_{1}}(A)$ is equal to 
$\widetilde{l}_{w_{2}}(A)$.
By the definition of $\widetilde{l}_{w_{1},j}$,
\begin{eqnarray*}
\widetilde{l}_{w_{1},j}(A) &=& \sum_{X \in \mathcal{A}} 
                                 \widetilde{\sigma}_{w_{1}}^{j}(A,X)\\
                         &=&  \sum_{X \in \mathcal{A} \setminus \{A,B\}} 
                                 \widetilde{\sigma}_{w_{1}}^{j}(A,X)
                               + \widetilde{\sigma}_{w_{1}}^{j}(A,B)
                               + \widetilde{\sigma}_{w_{1}}^{j}(A,C)
                               \cdots \cdots (\ast) 
\end{eqnarray*}
It is easily checked that 
$\widetilde{\sigma}_{w_{1}}^{j}(A,C)$ and
$\widetilde{\sigma}_{w_{2}}^{j}(A,B)$ are equal to $0$.
Since $\cdots$ $A$ $\cdots$ $B$ $\cdots$ $A$ $\cdots$ $B$ $\cdots$ in $w_{1}$,
$\cdots$ $A$ $\cdots$ $C$ $\cdots$ $A$ $\cdots$ $C$ $\cdots$ in $w_{2}$
and the second subscript of $\|A\|$ and 
the second subscript of $\|B\|$ are equal,
we obtain  $\widetilde{\sigma}_{w_{1}}^{j}(A,B)$ is equal to 
$\widetilde{\sigma}_{w_{2}}^{j}(A,C)$. 
Thus 
\begin{eqnarray*}
(\ast) &=&  \sum_{X \in \mathcal{A} \setminus \{A,B\}} 
                                 \widetilde{\sigma}_{w_{2}}^{j}(A,X)
                               + \widetilde{\sigma}_{w_{2}}^{j}(A,B)
                               + \widetilde{\sigma}_{w_{2}}^{j}(A,C)\\
       &=& \widetilde{l}_{w_{2},j}(A). 
\end{eqnarray*}
On $\widetilde{l}_{w_{1}}(B)$ is equal to 
$\widetilde{l}_{w_{2}}(B)$. By the definition of $\widetilde{l}_{w_{1},j}$,
\begin{eqnarray*}
\widetilde{l}_{w_{1},j}(A) &=& \sum_{X \in \mathcal{A}} 
                                 \widetilde{\sigma}_{w_{1}}^{j}(B,X)\\
                         &=&  \sum_{X \in \mathcal{A} \setminus \{B,C\}} 
                                 \widetilde{\sigma}_{w_{1}}^{j}(B,X)
                               + \widetilde{\sigma}_{w_{1}}^{j}(B,A)
                               + \widetilde{\sigma}_{w_{1}}^{j}(B,C)
                               \cdots \cdots (\ast\ast) 
\end{eqnarray*}
Then we show $\widetilde{\sigma}_{w_{1}}^{j}(B,A) +   
\widetilde{\sigma}_{w_{1}}^{j}(B,C) = 0$.
In fact, if $|A| \in \widehat{a_{i}}$ (in this case $|B|,|C| \in \widehat{a_{i}}$)
and $\widehat{a_{i}}$ is a fixed point of $\tau$,
then  $\widetilde{\sigma}_{w_{1}}^{j}(B,A)$ is equal to $\mathbf{e}_{(i,i)}$ and 
$\widetilde{\sigma}_{w_{1}}^{j}(B,C)$ is equal to $\mathbf{e}_{(i,i)}$ if 
$j$ is equal to $i$. Otherwise $\widetilde{\sigma}_{w_{1}}^{j}(B,A)$ 
is equal to $\mathbf{0}$ and 
$\widetilde{\sigma}_{w_{1}}^{j}(B,C)$ is equal to $\mathbf{0}$.
Since $K_{(i,i)}$ is equal to $\mathbb{Z}/2\mathbb{Z}$, 
we obtain  $\widetilde{\sigma}_{w_{1}}^{j}(B,A)$ $+$   
$\widetilde{\sigma}_{w_{1}}^{j}(B,C)$ $=$ $2\mathbf{e}_{(i,i)}$
$=$ $0$.
If  $|A| \in \widehat{a_{i}}$ and $\widehat{a_{i}}$ is a free orbit of $\tau$,
then we obtain $\widetilde{\sigma}_{w_{1}}^{j}(B,A)$ is equal to
$-\widetilde{\sigma}_{w_{1}}^{j}(B,C)$.
Thus $\widetilde{\sigma}_{w_{1}}^{j}(B,A)$ $+$   
$\widetilde{\sigma}_{w_{1}}^{j}(B,C)$ $=$ $0$.
Therefore we obtain 
$$(\ast\ast) =  \sum_{X \in \mathcal{A} \setminus \{B,C\}} 
                                 \widetilde{\sigma}_{w_{1}}^{j}(B,X)=
                 \widetilde{l}_{w_{2},j}(B).$$
On $\widetilde{l}_{w_{1}}(C)$ is equal to 
$\widetilde{l}_{w_{2}}(C)$.
By the definition of $\widetilde{l}_{w_{1},j}$,
\begin{eqnarray*}
\widetilde{l}_{w_{1},j}(C) &=& \sum_{X \in \mathcal{A}} 
                                 \widetilde{\sigma}_{w_{1}}^{j}(C,X)\\
                         &=&  \sum_{X \in \mathcal{A} \setminus \{A,B\}} 
                                 \widetilde{\sigma}_{w_{1}}^{j}(C,X)
                               + \widetilde{\sigma}_{w_{1}}^{j}(C,A)
                               + \widetilde{\sigma}_{w_{1}}^{j}(C,B)
                               \cdots \cdots (\ast\ast\ast) 
\end{eqnarray*}
It is easily checked that 
$\widetilde{\sigma}_{w_{1}}^{j}(C,A)$ and
$\widetilde{\sigma}_{w_{2}}^{j}(C,B)$ are equal to $0$.
Since $\cdots$ $B$ $\cdots$ $C$ $\cdots$ $B$ $\cdots$ $C$ $\cdots$ in $w_{1}$,
$\cdots$ $A$ $\cdots$ $C$ $\cdots$ $A$ $\cdots$ $C$ $\cdots$ in $w_{2}$
and the first subscript of $\|A\|$ and 
the first subscript of $\|B\|$ are equal,
we obtain  $\widetilde{\sigma}_{w_{1}}^{j}(C,A)$ is equal to 
$\widetilde{\sigma}_{w_{2}}^{j}(C,B)$. 
Thus 
\begin{eqnarray*}
(\ast\ast\ast) &=&  \sum_{X \in \mathcal{A} \setminus \{A,B\}} 
                                 \widetilde{\sigma}_{w_{2}}^{j}(C,X)\\
       &=& \widetilde{l}_{w_{2},j}(C). 
\end{eqnarray*} 
By the above, in this case we prove invariance of $\widetilde{S_{o}}$
by the third homotopy move.\par
On the case $A \in \mathcal{A}_{ii}$ and $B,C \not\in \mathcal{A}_{jj}$,
$A \in \mathcal{A}_{ii}$ and $A,B \not\in \mathcal{A}_{jj}$,
and  $A,B,C \not\in \mathcal{A}_{jj}$, we can prove invariance of 
$\widetilde{S_{o}}$ by using a part of the above case.\par
Now we completed the proof.
\end{proof}  
Note that in the proof of invariance of $\widetilde{S_{o}}$ by the 
first homotopy move, we did not use the condition $\|A\| \in Q_{\alpha,k}$.
Thus we obtain a following corollary.
\begin{corollary}
The $\widetilde{S_{o}}$ is an $S_{\alpha,k}$-homotopy invariant for 
nanowords over $\alpha_{k}$.
\end{corollary}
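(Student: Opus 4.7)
The plan is to read the corollary off from the preceding proposition together with the parenthetical observation noted just above the corollary statement. Concretely, $S_{\alpha,k}$-homotopy and $(Q_{\alpha,k},R_{\alpha,k},S_{\alpha,k})$-homotopy differ only in the first homotopy move: the latter requires $\|A\|\in Q_{\alpha,k}$, whereas the former permits the deletion of $xAAy$ for every letter $A$. The conditions for moves (2) and (3) coincide in the two theories, since $R_{\alpha,k}$ is by definition exactly the graph of $\tau_k$ and the third move uses the same data $S_{\alpha,k}$ in both. Thus the proposition already guarantees invariance of $\widetilde{S_o}$ under moves (2) and (3), and no further work is needed there.

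Accordingly I would re-examine only the first-move step in the proof of the preceding proposition. There, for the move
\[
w_{1}=(\mathcal{A},xAAy)\longrightarrow w_{2}=(\mathcal{A}\setminus\{A\},xy),
\]
the argument proceeds by observing that for every $X\in\mathcal{A}$ and every $j\in\hat{k}$, both $\widetilde{\sigma}_{w_{1}}^{j}(A,X)$ and $\widetilde{\sigma}_{w_{1}}^{j}(X,A)$ vanish, because the two occurrences of $A$ are adjacent in $xAAy$ and so cannot interleave with any other letter in either of the two patterns that give a nonzero $\widetilde{\sigma}$. This forces $\widetilde{l}_{w_{1}}(A)=\mathbf{0}$ and $\widetilde{l}_{w_{1}}(X)=\widetilde{l}_{w_{2}}(X)$ for every $X\neq A$, after which $A$ contributes nothing to any $(\widetilde{B}_{w_{1}})_{j}$ and the values of $\widetilde{S_o}$ on $w_{1}$ and $w_{2}$ agree.

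The crucial point — and essentially the whole content of the corollary — is that this reasoning is purely combinatorial, relying only on the adjacency of the two $A$'s in the factor $AA$, and nowhere invokes the hypothesis $\|A\|\in Q_{\alpha,k}$. Consequently the same verbatim argument establishes invariance under the unrestricted first move of $S_{\alpha,k}$-homotopy. I do not expect any genuine obstacle: the corollary is really a structural remark on what the proof of the proposition actually used, and the proof reduces to citing the proposition for moves (2) and (3) and quoting the first-move argument without the superfluous hypothesis on $Q_{\alpha,k}$.
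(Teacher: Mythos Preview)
Your proposal is correct and matches the paper's approach exactly: the paper simply remarks, immediately before the corollary, that the proof of invariance under the first homotopy move never used the hypothesis $\|A\|\in Q_{\alpha,k}$, and then states the corollary without further argument. Your more detailed unpacking of why adjacency of the two $A$'s forces all the relevant $\widetilde{\sigma}$-terms to vanish is precisely the content of that first-move paragraph in the proposition's proof.
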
 
\subsection{Final Remark}
In this section, we extended the invariants $lk$, $w$ and extended $S_{o}$
for nanophrases over $\alpha$ to the
$(Q_{\alpha,k},R_{\alpha,k},S_{\alpha,k})$-homotopy invariants 
for nanowords over $\alpha_{k}$.
As a similarly, we can extend the invariant $\gamma$ for nanophrase
over $\alpha$ with homotopy data $\Delta_{\alpha^{3}}$ which is 
defined in \cite{fu1}(see also \cite{tu1}) 
to a $(Q_{\alpha,k},R_{\alpha,k},$ $(\Delta_{\alpha^{3}})_{\alpha,k})$-homotopy
invariant for nanowords. Moreover we can extend the lemmas 5.2 and 5.3 
in the paper \cite{fu2}.
A natural question "Can we extend any $S$-homotopy invariant for nanophrases
over $\alpha$ to a $(Q_{\alpha,k},R_{\alpha,k},S_{\alpha,k})$-homotopy invariant
for nanowords over $\alpha_{k}$?" is a future problem.\\
\\ 
\begin{acknow}
The author would like to express to my gratitude to Goo Ishikawa
for many useful comments especially on Section \ref{ext}.
Furthermore, the author would like to very thanks to Andrew Gibson 
for many useful comments on invariants
in Section 5. 
Thanks to that, invariants $S_{o}$
and $\widetilde{S}_{o}$ in Section 5 were improved.
\end{acknow}

\vspace{0.3cm}
Department of Mathematics, Hokkaido University

Sapporo 060-0810, Japan

e-mail: fukunaga@math.sci.hokudai.ac.jp

\end{document}